\newcommand{\be}{\begin{equation}}
\newcommand{\ee}{\end{equation}}
\newcommand{\beq}{\begin{eqnarray}}
\newcommand{\eeq}{\end{eqnarray}}
\newtheorem{thm}{Theorem}[section]
\newtheorem{que}{Question}
\newtheorem{lma}[thm]{Lemma}
\newtheorem{prop}[thm]{Proposition}
\theoremstyle{remark}
\newtheorem{rem}[thm]{Remark}
\numberwithin{equation}{section}
\def\be{\begin{equation}}
\def\ee{\end{equation}}
\def\bee{\begin{equation*}}
\def\eee{\end{equation*}}
\def\Ric{\text{\rm Ric}}
\def\Rm{\text{\rm Rm}}
\def\e{\varepsilon}
\def\a{{\alpha}}
\def\b{{\beta}}
\def\R{\mathbb{R}}
\begin{document}

\title[]
{On the weakly conical expanding gradient Ricci solitons}

\author{Pak-Yeung Chan}
\address[Pak-Yeung Chan]{Department of Mathematics, National Tsing Hua University, Hsin-Chu, Taiwan}
\email{pychan@math.nthu.edu.tw}

 \author{Man-Chun Lee}
\address[Man-Chun Lee]{Department of Mathematics, The Chinese University of Hong Kong, Shatin, Hong Kong, China}
\email{mclee@math.cuhk.edu.hk}

\renewcommand{\subjclassname}{
  \textup{2020} Mathematics Subject Classification}
\subjclass[2020]{Primary 51F30, 53C24}

\date{\today}

\begin{abstract}
In this work, we construct several sequences of metrics on sphere with different limiting behaviors. By combining with the work of Deruelle, we use it and the localized maximum principle to construct various examples of expanding gradient Ricci solitons with positive curvature and exotic curvature decay. This answers a question proposed by Chow-Lu-Ni and also a question by Cao-Liu, respectively.
\end{abstract}

\maketitle

\markboth{Pak-Yeung Chan, Man-Chun Lee}{On the weakly conical expanding gradient Ricci solitons}

\section{Introduction}\label{s-introduction}

Let $(M,g)$ be a complete Riemannian manifold. It is said to be a gradient Ricci soliton if there is $f\in C^\infty(M)$ and $\lambda\in \mathbb{R}$ such that 
$$\Ric+\frac\lambda2 g= \nabla^2 f.$$
It is said to be expanding, shrinking and steady if $\lambda=1,-1,0$ respectively. This is a natural generalization of the Einstein metric. More importantly, a gradient Ricci soliton generates a self-similar solution to the Ricci flow and has played a predominant role in Perelman's resolution to the Poincar\'e conjecture.  In higher dimensions, Ricci flow on compact manifolds might develop conical singularity. An enormous class of Ricci flow singularities in higher dimensions models on asymptotically conical shrinking Ricci solitons (see \cite{Stolarski2022}).  It has been proposed by \cite{GS2018, BamlerChen2023, AD2024} that asymptotically conical expanding Ricci solitons may be used to continue the flow past the singular time which resolve conical singularities. This motivates us to study the expanding gradient Ricci soliton in depth.

In this work, we would like to study some exotic curvature behaviour of expanding gradient Ricci soliton with positive curvature operator. More precisely, we aim to study the asymptotic scalar curvature ratio:
\[
\text{ASCR}(g):=\limsup_{x\to\infty}\, r^2\cdot  \mathrm{scal}_g
\]
where $r(\cdot)$ denotes the distance from a fixed point. It is not difficult to see that $\text{ASCR}(g)$ is scaling invariant and is well defined independent of the base point of the distance function $r$. $\text{ASCR}(g)$ is finite on any smoothly asymptotically conical manifold and is infinite on any asymptotically cylindrical manifold of positive scalar curvature at infinity. In the presences of curvature non-negativity, the size of $\text{ASCR}(g)$ is also related to gap phenomenon of being flat, for example see \cite{GreeneWu1982}. This is also related to singularity analysis of Ricci flow, we refer readers to \cite{ChowLu2004} for the detailed exposition.

In \cite{CaoLiu2022}, Cao-Liu  showed that a complete $4$-dimensional gradient Ricci expander with nonnegative Ricci curvature and ASCR$(g)<+\infty$ must have finite Riemannian curvature ratio $\sup_Mr^2|\Rm|<+\infty$, and thus is asymptotic to a $C^{1,\a}$ cone at infinity by \cite{ChenDeruelle2015}. See also \cite{CaoLiuXie2023} for a higher dimensional generalization. Chow-Lu-Ni \cite{ChowBook0} asked if the curvature of a positively curved Ricci expander necessarily decays quadratically at infinity. Precisely, the following is asked:
\begin{que}[Ch.9 Sec 7, Q.10 in \cite{ChowBook0}]\label{iascr}
    Does an expanding gradient Ricci soliton with positive curvature operator necessarily have finite asymptotic scalar curvature ratio \text{ASCR}$(g)<+\infty$?
\end{que}
The answer to Question \ref{iascr} is affirmative when $n=2$ by a classification of result of $2$ dimensional gradient Ricci solitons \cite{BM2015}.
In higher dimensions, we demonstrate the existence of two types of positively curved weakly conical Ricci expanders with infinite asymptotic scalar curvature ratio, but with different decay rates. Indeed, we are able to construct expanders with positively infinite asymptotic curvature operator.

\begin{thm} \label{thm:ex1}
Let $n\ge 3$. There exist infinitely many asymptotically conical (in the distance sense) complete expanding gradient Ricci solitons $(M^n,g, f)$ satisfying the following 
\begin{enumerate}
    \item[(a)] positive curvature operator $\Rm>0$;
    \item[(b)] curvature decays at infinity, i.e. $|\Rm|\to 0$ as $x\to \infty$;
    \item[(c)] infinite curvature lower bound ratio, more precisely
 \begin{equation}\label{lambn}
    \limsup_{x\to\infty}r^2 \lambda_n(\Rm)=+\infty,
    \end{equation}
    where $\lambda_n(\Rm)$ denotes the n-th eigenvalue of the curvature operator.
\end{enumerate}
In particular, these examples have infinite scalar curvature ratio.
\end{thm}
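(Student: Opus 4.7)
The plan is to combine a tailored construction of sequences of smooth positively-curved metrics on $S^{n-1}$ with Deruelle's existence theorem for expanders asymptotic to smooth cones, and then to extract a single limit expander via the localized maximum principle.

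First I would construct a sequence $\{g_k\}_{k\ge 1}$ of smooth metrics on $S^{n-1}$, each with strictly positive curvature operator, such that (i) $g_k$ converges in the Gromov--Hausdorff (hence distance) sense to a limit $g_\infty$, while (ii) a chosen eigenvalue of $\Rm_{g_k}$ develops localized bumps whose sizes grow unboundedly as $k\to\infty$. Such a sequence can be produced by starting from a highly positively-curved base (say a suitably rescaled round metric) and inserting small but sharply-curved caps at a divergent sequence of points, with the $C^0$-norm of each perturbation tending to zero while its intrinsic curvature blows up. Positivity of the curvature operator is preserved throughout by using explicit convex local models glued in via sharp cutoff functions.

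For each $k$, Deruelle's theorem provides a complete expanding gradient Ricci soliton $(N_k, h_k, f_k)$ with $\Rm_{h_k}>0$ smoothly asymptotic to the cone $dr^2+r^2 g_k$. On each individual $N_k$ the asymptotic curvature ratio is controlled by $\sup|\Rm_{g_k}|$, so no single $N_k$ exhibits the pathology claimed in the theorem; rather, the desired example will arise as a pointed Cheeger--Gromov subsequential limit $(N_\infty, h_\infty, f_\infty)$ of the $(N_k,h_k,f_k)$ based at the soliton tips. By arranging for the high-curvature bumps in $g_k$ to be situated so that they correspond to regions in $N_k$ at larger and larger scales $r_k\to\infty$ (chosen so $r_k^2$ is outpaced by the eigenvalue size), the limit $N_\infty$ will satisfy $|\Rm|\to 0$ at infinity yet retain $\limsup_{x\to\infty} r^2 \lambda_n(\Rm_{h_\infty})=+\infty$ along the persistent bumps. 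Varying the choice of insertion sizes, positions, and counts yields infinitely many non-isometric examples, and the distance-sense conicity of $N_\infty$ follows from the distance convergence $g_k \to g_\infty$.

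The main obstacle lies in controlling the limiting step. Uniform curvature bounds on $\{N_k\}$ are by design unavailable, precisely because we have arranged for curvature eigenvalues on the links to blow up. This is where the localized maximum principle is essential: applied on an exhaustion of compact regions of the $N_k$, it propagates positivity $\Rm\ge 0$ across the Cheeger--Gromov limit without relying on uniform bounds at infinity, after which a strong maximum principle via the parabolic evolution satisfied by $\Rm$ along the associated self-similar flow upgrades to $\Rm>0$. A secondary difficulty is organizing the construction in Step 1 so that the bumps are both small enough in $C^0$ to survive the GH limit yet large enough in curvature to persist as genuine obstructions to quadratic decay; calibrating these two scales against the rate at which Deruelle's construction pushes bumps to infinity in the resulting soliton is the technical core of the argument.
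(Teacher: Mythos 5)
Your skeleton (a sequence of positively curved link metrics on $\mathbb{S}^{n-1}$, Deruelle's theorem for each, then a limit of the resulting expanders) is the same as the paper's, but the two analytic cores of the theorem are missing. First, you give no argument for (b): curvature decay of the limit expander is not automatic and is in fact the delicate point. In the paper it is governed by Lemma \ref{curdecay}: the curvature of the limit expander decays at infinity if and only if its asymptotic cone minus the tip is Reifenburg, and the link singularity is engineered accordingly, with warping function $(1-\varepsilon r)\sin(kr)/k$ so that $a'(0)=1$, i.e.\ the cone angle at the singular point stays Euclidean while the second-order behaviour makes $\lambda_1(\Rm)$ blow up (Proposition \ref{prop:sphere-3}). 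Your ``sharply-curved caps glued at a divergent sequence of points'' (on a compact link points cannot diverge, so presumably they accumulate) could just as well limit to a non-Reifenburg singularity, in which case the limit expander has \emph{non-decaying} curvature --- that is exactly the construction of Theorem \ref{thm:nodecay}, not Theorem \ref{thm:ex1}. So the calibration you defer as ``the technical core'' is precisely what separates conclusion (b) from its failure, and no mechanism for it is provided. Second, for (c) your picture of bumps located ``at larger and larger scales $r_k$'' is inconsistent with conical geometry: a curvature bump at a point of the link corresponds to an entire ray of the cone, along which $r^2\Rm$ is asymptotically the link curvature at that point. The paper converts the unbounded $\lambda_n$ of the singular limit cone into $\limsup_{x\to\infty} r^2\lambda_n(\Rm_{g_\infty})=+\infty$ by combining the local smooth convergence of the induced flow $g_\infty(t)$ to the cone metric away from the singular ray (\cite[Theorem 1.6]{DSS2022b}) with the distance distortion estimate \eqref{ddist} and the self-similarity identity $|\Rm(g)|(\phi_t(x))=t|\Rm(g(t))|(x)$; nothing in your argument performs this transfer.

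The step you single out as the main obstacle is not an obstacle, and the tool you invoke for it is misapplied. Uniform curvature bounds for the approximating expanders \emph{are} available at positive times: $\Rm(g_i)\ge 0$ together with the uniform lower bound on the asymptotic volume ratio gives Perelman's estimate $|\Rm(g_i(t))|\le C t^{-1}$, so Hamilton's compactness applies directly (this is Proposition \ref{prop:appr}), and $\Rm\ge 0$ passes to a smooth Cheeger--Gromov limit trivially --- no maximum principle is needed for that. The localized maximum principle (Proposition \ref{prop:localMP}) plays no role in this theorem; in the paper it is used only for the super-polynomial decay example of Theorem \ref{thm:fast}. What you do need, and only gesture at, is the strong maximum principle argument of \cite[Lemma 2.2]{GS2018}, combined with the fact that the links constructed in Section \ref{sec:sphere-metric} have curvature strictly larger than $1$ on a fixed region, to upgrade $\Rm(g_\infty)\ge 0$ to the strict positivity $\Rm(g_\infty)>0$ required in (a).
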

\begin{rem} The estimate \eqref{lambn} on $\lambda_n(\Rm)$ is due to the fact that on a cone $(C(X),g_c)$ with smooth link $(X^{n-1}, g_X)$ such that $\Rm(g_X)>1$, where $g_c:=dr^2+r^2g_X$. The kernel of the curvature operator $\Rm(g_c)$ is spanned by elements of the form $\partial_r\wedge v$, where $v\in TX$. Thus $\lambda_j(\Rm(g_c))=0$ for $j=1,\dots,n-1.$
\end{rem}

This answers Question \ref{iascr} in the negative for $n\ge 3$. The first type of expanders has curvature decay to $0$ at infinity and is weakly asymptotic to some singular cones. The links of these cones are chosen carefully so that the corresponding cones are Reifenburg away from the tip and are smooth away from the tip and two edges. We refer the reader to Section \ref{pre} for a brief discussion on asymptotically conical expanders in various senses, as well as the definition of being Reifenburg.

On the other hand, we also construct the second type of expanders which doesn't have any curvature decay at all. The asymptotic cones of these expanders are cones over singular metrics on spheres with two isolated singularities. Thus these asymptotic cones are smooth away from the tip and two edges.

\begin{thm} \label{thm:nodecay} Let $n\ge 3$. There exist infinitely many asymptotically conical (in the distance sense) complete expanding gradient Ricci solitons $(M^n, g, f)$ satisfying the following 
\begin{enumerate}
    \item[(a)] positive curvature operator $\Rm>0$;
    \item[(b)] curvature does not decay at infinity, $$0<\limsup_{x\to\infty}|\Rm|<\infty.$$
\end{enumerate}
In particular, $\text{ASCR}(g)=+\infty$.
\end{thm}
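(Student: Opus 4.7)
The plan is to mirror the strategy for Theorem~\ref{thm:ex1} but with a different degenerating link: instead of a Reifenberg-type limit, I will use a sequence of smooth metrics on $S^{n-1}$ whose limit has two genuine isolated conical singularities. Concretely, using the sphere-metric construction developed earlier in the paper, I would take a sequence $\{h_k\}$ of smooth Riemannian metrics on $S^{n-1}$ with curvature operator strictly larger than that of the unit round sphere, which converge smoothly away from two antipodal points $p_\pm\in S^{n-1}$ to a singular metric $h_\infty$ having edge-type conical singularities at $p_\pm$ and being smooth elsewhere. Deruelle's theorem then supplies, for each $k$, a smooth complete expanding gradient Ricci soliton $(\R^n,g_k,f_k)$ with positive curvature operator and smoothly asymptotic to the smooth cone $(C(S^{n-1}), dr^2+r^2 h_k)$.

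The second step is to use the localized maximum principle to obtain two-sided uniform estimates on $|\Rm(g_k)|$ and its higher covariant derivatives on any fixed compact region of $\R^n$, uniform in $k$. Combined with Hamilton's Cheeger--Gromov compactness theorem for Ricci solitons, this produces a smooth pointed subsequential limit expanding gradient Ricci soliton $(M_\infty,g_\infty,f_\infty)$ with $M_\infty\cong\R^n$, $\Rm(g_\infty)\ge 0$, and which is weakly asymptotically conical (in the distance sense) to the singular cone $C(S^{n-1},h_\infty)$.

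Verification of (a) and (b) would then proceed as follows. For (a), strict positivity $\Rm(g_\infty)>0$ follows from the strong maximum principle applied along the self-similar Ricci flow generated by $g_\infty$, combined with the non-flatness of the asymptotic cone. The upper bound $\limsup|\Rm(g_\infty)|<\infty$ in (b) is inherited from the uniform curvature bounds established in the compactness step. For the non-decay statement, the key observation is that the expander's regularization of the conical edge singularity at each $p_\pm$ must take place at a scale that is uniform, not collapsing to zero, in $k$; one then argues that the resulting non-trivial curvature concentration near each ray $\{r\,p_\pm: r>0\}$ persists in the limit, so that $|\Rm(g_\infty)|$ remains bounded below by a positive constant along these rays as $r\to\infty$. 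The infinite-ASCR conclusion is then automatic.

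The hard part, I expect, will be ensuring that the limit $g_\infty$ is smooth on all of $\R^n$ rather than inheriting singularities along the two rays $\{r\,p_\pm\}$, since $|\Rm(g_k)|$ blows up near these rays as $k\to\infty$ because $|\Rm(h_k)|(p_\pm)\to\infty$. The role of the localized maximum principle is precisely to confine this blow-up spatially and to guarantee that the expander's smoothing of the emerging cone singularity takes place at a scale that is uniform in $k$. Verifying this, along with the positive lower bound on $|\Rm(g_\infty)|$ that actually delivers the non-decay conclusion in~(b), is the central analytic difficulty of the argument.
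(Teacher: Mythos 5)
Your overall strategy matches the paper's: degenerate the link toward a metric on $\mathbb{S}^{n-1}$ with two isolated conical singularities (the paper's Proposition~\ref{prop:sphere-4} with $\hat h_\delta=k^{-1}h_\delta$ and limit $h_0=ds^2+k^{-3}\sin^2(\sqrt{k}s)\,g_{\mathbb{S}^{n-2}}$), apply Deruelle's Theorem~\ref{thm:der} to each smooth link, and pass to a Cheeger--Gromov limit. However, there are three issues in how you carry this out.

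First, the compactness step is not powered by the localized maximum principle. Proposition~\ref{prop:localMP} is a one-sided estimate that forces subsolutions of the heat equation to decay quickly where the initial data vanishes; it is what drives the \emph{fast-decay} construction in Theorem~\ref{thm:fast}, not uniform two-sided control of $|\Rm(g_i)|$. The uniform curvature estimates that make Hamilton compactness applicable come from Perelman's pseudolocality-type bound $|\Rm(g_i(t))|\le C_1 t^{-1}$, which holds uniformly in $i$ because $\Rm(g_i)\ge 0$ and $\mathrm{AVR}(g_i)\ge c_0>0$. This is exactly Proposition~\ref{prop:appr}.

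Second, and more seriously, the non-decay conclusion is not obtained by tracking ``curvature concentration along the rays'' through the limit. That heuristic is not a proof: nothing in the compactness argument keeps the curvature of $g_\infty$ from going to zero along the problematic rays, and the Perelman bound $|\Rm(g_i)|\le C$ at $t=1$ is \emph{uniform}, so there is no persistent concentration at fixed spatial scale. What replaces this heuristic in the paper is the sharp characterization in Lemma~\ref{curdecay}: for a complete nonflat expander with $\Ric\ge 0$ and bounded curvature, asymptotic to $C(X)$ in the distance sense, one has $\lim_{x\to\infty}|\Rm|=0$ if and only if $C(X)\setminus\{o\}$ is Reifenburg. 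Since your limit cone has genuine non-Euclidean tangent cones along the two edges $\{(r,q_i):r>0\}$, it is not Reifenburg there, and the ``only if'' direction of the lemma forces $\limsup_{x\to\infty}|\Rm(g_\infty)|>0$. Without this lemma (or an equivalent statement), your proposal has a genuine gap precisely at the central claim you flagged as the ``hard part.''

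Third, your concern that $g_\infty$ might inherit singularities along the rays is misplaced, for the same reason your heuristic fails. The uniform Perelman estimate at $t=1$ guarantees the Cheeger--Gromov limit $g_\infty=g_\infty(1)$ is a smooth complete expander; the cone singularities show up only in the $t\to 0^+$ limit (equivalently, at spatial infinity of the soliton). The transition scale between soliton geometry and conical geometry in $g_i$ grows as $i\to\infty$, so the degenerating conical curvature never appears at a fixed compact region of $M_i$. Strict positivity $\Rm(g_\infty)>0$ then follows from the strong maximum principle along the induced self-similar flow, as you say, using the $C^\infty_{\rm loc}$ convergence of $g_\infty(t)$ to the cone metric on the regular part \cite[Theorem 1.6]{DSS2022b} together with \cite[Lemma 2.2]{GS2018}. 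The upper bound in (b) is automatic since $g_\infty$ has bounded curvature (as a Cheeger--Gromov limit), and $\mathrm{ASCR}(g_\infty)=+\infty$ follows immediately.
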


The second line of examples is motivated by a work of Cao-Liu \cite{CaoLiu2022} who showed that for a complete noncompact $4$ dimensional expanding gradient Ricci soliton with nonnegative Ricci curvature, the curvature tensor $\Rm$ must be sufficiently pinched  by the scalar curvature, i.e. $|\Rm|\le C\,\mathrm{scal}_{g}$, if in addition the scalar curvature has at most polynomial decay. Motivating from this, they asked if the scalar curvature of a non-flat gradient expander must have at most polynomial decay, see also  \cite[Ch.9 Sec 7, Q.11]{ChowBook0}:
 \begin{que}[Question 1 in \cite{CaoLiu2022}]\label{CLq}
   Let $(M^n,g,f)$ be an {\sl irreducible}\footnote{In contrast to the original question in \cite{CaoLiu2022}, one further needs to assume the irreducibility of $M$ so as to rule out of the product of lower dimensional expanders.} complete non-compact non-flat expanding gradient Ricci soliton with nonnegative Ricci curvature, where $n\ge 3$. Does the scalar curvature $\mathrm{scal}_g$ have at most polynomial decay, namely \[\liminf_{x\to\infty}r^d \mathrm{scal}_g>0\] for some integer $d\ge 2$?   
 \end{que}
 In real dimension $2$, there exists a family of positively curved gradient expanders with exponential curvature decay and smoothly asymptotic to flat cones with small cone angles \cite{BM2015}. In dimension $n\ge 3$, we construct smoothly asymptotically conical Ricci expander with positive curvature operator and weak curvature decay rate faster than polynomials of any order. In particular, this provides a negative answer to Question \ref{CLq} and generalizes a previous construction in \cite{ChanZhu2022}.
\begin{thm}\label{thm:fast} Let $n\ge 3$. There exist infinitely many smoothly asymptotically conical complete expanding gradient Ricci solitons $(M^n,g, f)$ satisfying the following 
\begin{enumerate}
    \item[(a)] positive curvature operator $\Rm>0$;
    \item[(b)]  do not have at most polynomial curvature decay rate, moreover,
    $$ \liminf_{x\to\infty} d_g(x,p)^m\cdot |\Rm(x)|=0,$$
     for all $m\in \mathbb{N}$.
\end{enumerate}
\end{thm}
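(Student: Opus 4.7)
The plan is to apply Deruelle's realization theorem to a carefully engineered family of metrics on $S^{n-1}$ whose curvature excess vanishes to infinite order at a prescribed point, and then combine the smooth asymptotic convergence of the resulting soliton to its asymptotic cone with a localized maximum principle to extract super-polynomial curvature decay along the corresponding ray.

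First, starting from the round metric $g_0$ on $S^{n-1}$ of sectional curvature $1$ (for which $\Rm(g_0)=I$ on bivectors), I would construct a family of smooth perturbations $g_{X,\epsilon}$ such that $\Rm(g_{X,\epsilon})\ge I$ globally, with strict inequality $\Rm(g_{X,\epsilon})>I$ on $S^{n-1}\setminus\{p_0\}$ for a distinguished point $p_0$, and such that the excess $\Rm(g_{X,\epsilon})-I$ together with all its covariant derivatives vanishes at $p_0$. A convenient model is $g_{X,\epsilon}=g_0+\epsilon\, u\cdot\eta$, where $\eta$ is a fixed symmetric $(0,2)$-tensor that raises the curvature operator when added to $g_0$, and $u:S^{n-1}\to[0,\infty)$ is smooth, strictly positive on $S^{n-1}\setminus\{p_0\}$, and has identically vanishing Taylor series at $p_0$ (modeled locally on $e^{-1/d_{g_0}(x,p_0)^2}$ in a normal neighborhood of $p_0$). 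Varying $\epsilon$ produces the infinitely many distinct link geometries required.

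Next, for each such link metric, Deruelle's theorem---applied, if necessary, after first approximating $g_{X,\epsilon}$ by metrics with strict $\Rm>I$ and passing to a compactness-based limit---yields a smoothly asymptotically conical expanding gradient Ricci soliton $(M^n,g,f)$ with strictly positive curvature operator and asymptotic cone $C(S^{n-1},g_{X,\epsilon})$, giving condition (a). By construction the curvature operator of this cone vanishes to infinite order along the radial ray through $p_0$. To establish condition (b), I would use the smooth asymptotic convergence of $g$ to the cone together with a localized maximum principle applied to the evolution equation for $|\Rm|$ in a thin tubular neighborhood of the $p_0$-ray: barrier functions can be built from the infinite-order-vanishing profile $u$, allowing one to upgrade the polynomial-rate asymptotic matching to super-polynomial smallness of $|\Rm(g)|$ along this ray, and hence to produce, for every $m\in\mathbb{N}$, a sequence $x_k\to\infty$ with $d_g(x_k,p)^m\cdot|\Rm(x_k)|\to 0$.

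The main obstacle is the last step. Standard smooth asymptotic convergence to the cone yields only polynomial-order decay, so extracting super-polynomial smallness requires genuinely exploiting the infinite-order degeneracy of the link's curvature excess via the localized maximum principle rather than relying on convergence rates alone. A secondary technical point is ensuring that the boundary case $\Rm(g_{X,\epsilon})(p_0)=I$ does not cause Deruelle's construction to degenerate; this is expected to be handled by a perturbation-and-limit argument, although one must verify that both strict positivity of the curvature operator of the expander and the smooth asymptotic structure persist under the passage to the limit.
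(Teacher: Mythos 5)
Your overall strategy---Deruelle's theorem plus the localized maximum principle plus self-similarity---matches the paper's, but your key design choice differs in a way that leaves a genuine gap at the crux. In the paper the link metric $h_{\delta,\varepsilon}$ from Subsection~\ref{subsec:Reifenburg} is \emph{exactly} round (sectional curvature $\equiv 1$) on an open cap $(0,\delta]\times\mathbb{S}^{n-2}$, so the asymptotic cone $dr^2+r^2h_{\delta,\varepsilon}$ is exactly flat on a nonempty open conical set $U$. One then fixes $x_0\in U$ and a ball $B_{d_0}(x_0,2r_0)\Subset U$ on which the initial scalar curvature is $\leq 0$, so Proposition~\ref{prop:localMP} applies verbatim (after scaling) to $\varphi=\mathrm{scal}_{g(t)}$ and gives $t\,\mathrm{scal}(x_0,t)\leq r_0^{-2m-2}t^{m+1}$ for every $m$; self-similarity, $\mathrm{scal}_g(\phi_t(x_0))=t\,\mathrm{scal}_{g(t)}(x_0)$ together with $d_g(\phi_t(x_0),p)\leq (d_g(x_0,p)+C)t^{-1/2}$, converts this into super-polynomial spatial decay along the trajectory $\phi_t(x_0)$. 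In your construction the curvature excess of the link vanishes (to infinite order) only at the single point $p_0$, so the cone's curvature is strictly positive off the ray through $p_0$; hence on \emph{every} ball of definite size centered on that ray the initial scalar curvature is positive somewhere, and the hypothesis $\varphi(\cdot,0)\leq 0$ of Proposition~\ref{prop:localMP} fails. Even granting a variant of the maximum principle allowing small rather than nonpositive initial data, a fixed ball around a fixed $x_0$ only yields $\mathrm{scal}(x_0,t)\lesssim \epsilon_0+t^{m}$ for a fixed $\epsilon_0>0$, and after the self-similar conversion the term $\epsilon_0\, t^{1-\ell/2}$ blows up for $\ell>2$; to salvage this one would have to move the center point outward along the ray simultaneously with $t\to 0^+$ and run a quantitative two-parameter argument in which the admissible initial bound decays super-polynomially in the radius. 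That is exactly the step you flag as ``the main obstacle'' and delegate to unspecified barrier functions built from $u$, so as written the proof is incomplete precisely where the new content is needed; the paper's choice of an exactly flat open region is what makes the maximum principle applicable with no extra work.

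Two secondary points. First, the ansatz $g_{X,\epsilon}=g_0+\epsilon\,u\,\eta$ is not obviously admissible: the curvature perturbation involves $\nabla^2(u\eta)$, and for the profile $u\sim e^{-1/d^2}$ one has $|\nabla^2 u|\gg u$ near $p_0$, so near $p_0$ the sign of $\Rm(g_{X,\epsilon})-\mathrm{id}$ is dominated by Hessian terms of $u$ whose sign is not controlled by the choice of $\eta$; establishing $\Rm\geq 1$ with equality only at $p_0$ requires a genuinely careful construction (the paper uses rotationally symmetric warped products with explicit ODE comparisons). Second, Theorem~\ref{thm:der} only requires $\Rm(h)\geq 1$ with strict inequality somewhere, so your worry about the degenerate point $p_0$ and the accompanying perturbation-and-limit step is unnecessary; but removing it does not close the main gap above.
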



The paper is organized as follows. In Section \ref{pre}, we review some basic notions of asymptotically conical expanding gradient Ricci solitons. We collect some recent results on local maximum principles in Section \ref{maxx}.  In Section \ref{sec:sphere-metric}, we construct explicit smooth approximations of some singular metrics on sphere $\mathbb{S}^n$, including both Reifenburg and non-Reifenburg ones. We prove Theorems \ref{thm:ex1}, \ref{thm:nodecay} and \ref{thm:fast} in Section \ref{conssoli}.

\vskip0.2cm

{\it Acknowledgment:} Part of this work was carried out during visits by the first-named author to the Chinese University of Hong Kong in 2024, which we thank for hospitality.  The authors thank Yi Lai for helpful discussions on smoothing singular metrics on spheres.
P.-Y. Chan is partially supported by the Yushan Young Fellow Program of the Ministry of Education (MOE), Taiwan (MOE-108-YSFMS-0004-012-P1), and by a NSTC grant 113-2115-M-007 -014 -MY2. M.-C. Lee was partially supported by Hong Kong RGC grant (Early Career Scheme) of Hong Kong No. 24304222 and No. 14300623, a direct grant of CUHK and a NSFC grant No. 12222122.

\section{Preliminaries}\label{pre}
A complete Riemannian manifold $(M, g)$ with a smooth function $f\in C^{\infty}(M)$ is said to be an expanding gradient Ricci soliton if 
\[
\Ric+\frac12g=\nabla^2 f.
\]
In this case, $f$ is called the potential of the Ricci soliton. 
An expanding gradient Ricci soliton induces an immortal self-similar solution to the Ricci flow. 
Since the vector field $\nabla^g f$ is complete \cite{Zhang2009}, if $\phi_t$ denotes the flow of the 
vector field $-t^{-1}\nabla^g f$ with $\phi_1 = id_M$, then the family of self-similar metrics $g(t):=t \phi_t^*g$ satisfy the Ricci flow equation with $g(1)=g$. We call such $g(t)$ to be the induced (or canonical) Ricci flow from $(M,g,f)$.

We will be interested in two cases, either $\Ric_g\geq 0$ or $\sup_M|\Rm_g|<<1$, which are natural from the viewpoint of Ricci flow existence theory. In both cases, we see that $f$ will be a strictly convex so that $M$ is diffeomorphic to $\R^n$. In this case, $f$ admits a unique critical point $p\in M$. Moreover by the Morse's lemma, the level sets of the potential $\Sigma_s:=\{x\in M: f(x)=s\}$ are smooth hypersurfaces diffeomorphic to $\mathbb{S}^{n-1}$ for all $s>\min_M f$. It is also not difficult to see that for all $s_0>\min_M f$ and all $y\in \bigcup_{s>s_0}\Sigma_s$, there exist unique $t\in (0,1)$ and $x\in \Sigma_{s_0}$ such that $\phi_t(x)=y$. Furthermore by the definition of $g(t)$,
\[
t\cdot |\Rm(g(t))|(x)=|\Rm(g)|(\phi_t(x)).
\]
In case $g$ is of bounded curvature with nonnegative Ricci curvature, it follows from the  Hamilton-Perelman distance distortion \cite[Lemma 3.4]{SimonTopping2022} that for all $0<t\leq 1$
\begin{equation}\label{ddist}
 d_{g}(x,p)\le d_{g(t)}(x,p)=\sqrt{t}d_g(\phi_t(x), p)\leq d_{g}(x,p)+C_0(1-\sqrt{t}).
\end{equation}
for some $C_0>0$. The second inequality also holds if $g$ is only of bounded curvature. In this way, we see that the curvature decay rate of a Ricci expander is reflected by the behavior of $|\Rm(g)|(\phi_t(x))=t|{\Rm}(g(t))|(x)$ of its induced Ricci flow $g(t)$ for $x\in \Sigma_{s_0}$ and all sufficiently small  $t>0$. This observation will play a crucial role.

A complete gradient Ricci expander $(M, g, f)$ is said to be asymptotic to $(C(X),d_c, o)$ {\sl in the distance sense} if for some $x_0\in M$, its canonical induced flow $g(t)$ satisfies 
\begin{equation}\label{cmc}
(M,d_{g(t)},x_o)\xrightarrow{\text{pGH}} (C(X),d_c, o) \text{  as  } t\to 0^+
\end{equation}
in the pointed Gromov Hausdorff sense, and $d_{g(t)}$ converges to some distance function $d_0$ on $M$ as $t\to 0^+$ such that $(M,d_0)$ is isometric to $(C(X),d_c)$.
It can be seen from the distance distortion estimate \eqref{ddist} (see also \cite[Lemma 3.4]{SimonTopping2022}) that a Ricci expander with bounded curvature and nonnegative Ricci curvature with its induced flow $g(t)$ satisfying \eqref{cmc}, is also asymptotic to $(C(X),d_c, o)$ in the distance sense.\\
Let $(C(X),d_c, o)$ be a cone with smooth link, namely $(X,g_X)$ is a smooth closed manifold. Here the distance $d_c$ is induced by the smooth Riemannnian metric $g_c:=dr^2+r^2g_X$.
A complete non-compact expanding gradient Ricci soliton $(M, g, f)$ is said to be smoothly asymptotic to the cone $(C(X), g_c)$, if there exist constants $\varepsilon>0, R_0>0$, and $c_1$, a compact set $K\subseteq M$, and a diffeomorphism $\phi:$   $C(X)\setminus \overline{B(o, R_0)}\longrightarrow M\setminus K$, such that for any nonnegative integer $k$ and all $r> R_0$
\begin{eqnarray}
\label{cona}\sup_{\omega\,\in X}\left|\nabla_{g_c}^k \left(\phi^*g-g_c\right)\right|_{g_c}(r,\omega)&\le& o\left(r^{-k}\right) \text{  as } r\to +\infty;\\
\notag f\circ \phi(r,\omega)&=&\frac{r^2}{4}+c_1.
\end{eqnarray}
In particular, by the conical asymptotic \eqref{cona} of $g$ to $g_c$, such an expander comes out of $(C(X),d_c, o)$ in sense of \eqref{cmc}.
It was shown by Deruelle \cite{Deruelle2017} that a gradient Ricci expander of quadratic Ricci curvature decay with derivatives, i.e. for integer $k\ge 0$, 
\[
    \sup_{M}d_g(x, p_0)^{k+2}|\nabla^k \Ric(g)|(x)<+\infty,
\]
must be smoothly asymptotic to some metric cone with smooth link (see also \cite[Theorem 4.3.1]{Siepmann2013}). 
Let $(X,d)$ be a metric space and $A\subseteq X$. Recall that $A$ is said to be Reifenburg if for any $q\in A$ and any sequence $\lambda_i\to +\infty$, after passing to subsequence if necessary, $(X, \lambda_id, q)$ converges in pointed Gromov Hausdorff sense to the Euclidean space $(\R^n, \delta, o)$, where  $\delta$ is the distance function with respect to standard metric on $\R^n$. See also \cite{DSS2022, DeruelleSimonSchulze2024}.
Throughout this paper, we use $\mathrm{scal}_{g}$ to denote the scalar curvature of a metric $g$.

\section{Maximum principle along Ricci flows}\label{maxx}

In this section, we consider the Ricci flow $g(t)$ coming out of a non-smooth metric space. When the initial data are smooth on an open set $\Omega$, we require the flow $g(t)$ to achieve the initial data smoothly as $t\to 0$ on $\Omega$. This setup appears naturally in the study of geometric compactness problems. It is desirable to control the relevant geometric quantities along the Ricci flow locally in the the regular region. Here we collect some maximum principles established by the second named author and Tam \cite{LeeTam2022} which deal with the incomplete case and singular case.

The following localized maximum principle is proved in
\cite{LeeTam2022}. This roughly says that the subsolution to heat equation could be controlled quantitatively for a short time, along a Ricci flow with scaling-invariant curvature decay. 
\begin{prop}[Theorem 1.1 in \cite{LeeTam2022}]\label{prop:localMP}
Suppose $(M,g(t)),t\in [0,T]$ is a smooth solution to the Ricci flow which is possibly incomplete, such that $\Ric(g(t))\leq \a t^{-1}$ on $M\times (0,T]$ for some $\a>0$. Let $\varphi(x,t)$ be a continuous function on $M\times [0,T]$ such that $\varphi(x,t)\leq \a t^{-1}$ and  
$$(\partial_t-\Delta_{g(t)})\Big|_{(x_0,t_0)}\varphi\leq L\varphi$$
whenever $\varphi(x_0,t_0)>0$ in the sense of barrier, for some continuous function $L$ on $M\times [0,T]$ with $L\leq \a t^{-1}$. If $x_0\in M$ is a point so that $B_{g(0)}(x_0,2)\Subset M$ and $\varphi(x,0)\leq 0$ on $B_{g(0)}(x_0,2)$. Then for any $m\in\mathbb{N}$, there exists $\hat T(n,m,\a)>0$ such that for all $t\in [0,T\wedge \hat T]$,
$$\varphi(x_0,t)\leq t^m.$$
\end{prop}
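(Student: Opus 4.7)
The approach is a localized parabolic maximum principle tailored to the singular Ricci bound $\Ric(g(t)) \leq \alpha/t$. The two central ingredients are a Simon--Topping-type distance distortion estimate for such Ricci flows and a Perelman-style time-dependent cutoff function adapted to the moving metric.

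First, using the distance distortion for Ricci flows with $\Ric \leq \alpha/t$, I would pick $\hat T_0 = \hat T_0(n, \alpha)$ so that $B_{g(t)}(x_0, 3/2) \Subset B_{g(0)}(x_0, 2)$ for all $t \in [0, \hat T_0]$. Next I would build a smooth radial cutoff of the form $\chi(x, t) = \phi(d_{g(t)}(x, x_0))$, where $\phi$ is a smooth nonincreasing function identically $1$ on $[0, 1]$, supported in $[0, 3/2]$, and satisfying $|\phi'|^2 \leq C_0\phi$ and $|\phi''| \leq C_0$. Combining Calabi's trick with the Laplacian comparison and the distance evolution available under $\Ric \leq \alpha/t$, one obtains, in the barrier sense,
\[
(\partial_t - \Delta_{g(t)})\chi \leq \frac{C_1(n,\alpha)}{\sqrt{t}}, \qquad \frac{|\nabla\chi|^2}{\chi} \leq C_2(n,\alpha).
\]

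Next I would apply the parabolic maximum principle to the test function $F(x, t) := \chi(x, t)\,\varphi(x, t) - A t^{m}$, where $A = A(n, m, \alpha) \geq 1$ is to be chosen. Since $\chi \leq 1$ and $\varphi(\cdot, 0) \leq 0$ on $B_{g(0)}(x_0, 2)$, we have $F(\cdot, 0) \leq 0$ there, while $F < 0$ outside $\mathrm{supp}\,\chi$. Assuming toward contradiction that $\sup F > 0$ on $M \times (0, T \wedge \hat T]$, there exist a first touching time $t_0 \in (0, T \wedge \hat T]$ and a point $y_0 \in \{\chi > 0\}$ with $F(y_0, t_0) = 0$, $\nabla(\chi\varphi)(y_0, t_0) = 0$, and $(\partial_t - \Delta)F \geq 0$ in the barrier sense. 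Using $\nabla\varphi = -\varphi\,\nabla\chi/\chi$ at this point, the product rule yields
\[
(\partial_t - \Delta)(\chi\varphi) = \varphi(\partial_t - \Delta)\chi + \chi(\partial_t - \Delta)\varphi + 2\frac{|\nabla\chi|^2}{\chi}\varphi,
\]
and combining this with $\varphi \leq \alpha/t$, $(\partial_t - \Delta)\varphi \leq L\varphi$ with $L \leq \alpha/t$, the identity $\chi\varphi = A t_0^m$, and the cutoff estimates leads to an inequality that must fail for $t_0 > 0$ sufficiently small.

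The main obstacle is that a single-shot execution of this argument only delivers $\varphi(x_0, t) \leq C\sqrt{t}$, since the cutoff evolution contributes an unavoidable $t^{-1/2}$ factor that competes with the target rate. To reach an arbitrary polynomial rate $t^m$, one bootstraps: once the crude bound $\varphi \leq C_k t^{k/2}$ has been established on the ball $B_{g(0)}(x_0, 2 - k\epsilon)$, one reruns the same maximum principle on the slightly smaller ball $B_{g(0)}(x_0, 2 - (k+1)\epsilon)$ with the test function $\chi_{k+1}\varphi - C_{k+1}t^{(k+1)/2}$, using the improved pointwise bound on $\varphi$ inside $\mathrm{supp}\,\chi_{k+1}$ to upgrade the exponent by a half power. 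The annulus of radii $1$ to $2$ accommodates $O(m)$ such refinements, and $\hat T(n, m, \alpha)$ is chosen small enough that after $2(m+1)$ iterations the resulting constant absorbs into $t^m$, giving $\varphi(x_0, t) \leq C_{2(m+1)} t^{m+1} \leq t^m$. This iterative bootstrap, rather than a single parabolic maximum principle, is the delicate heart of the argument.
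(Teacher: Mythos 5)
The paper does not reprove this result; it cites it directly from Lee--Tam, so the comparison is against that reference. Your setup (Perelman-type moving cutoff with $(\partial_t-\Delta)\chi\le C_1 t^{-1/2}$ and $|\nabla\chi|^2/\chi\le C_2$, plus a first-touching-point argument for $\chi\varphi-At^m$) is the right framework, but there is a genuine gap in the way you close it. The single-shot estimate does \emph{not} deliver $\varphi(x_0,t)\le C\sqrt{t}$: at a touching point $(y_0,t_0)$ of $\chi\varphi-At^m$, the only a priori pointwise bound on $\varphi$ available near the edge of $\mathrm{supp}\,\chi$ is $\varphi\le\alpha t_0^{-1}$, so the cutoff error is $\varphi(\partial_t-\Delta)\chi\le \alpha C_1 t_0^{-3/2}$, which dominates $mA t_0^{m-1}$ for \emph{every} $m\ge 0$ as $t_0\to 0^+$ and produces no contradiction. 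Consequently your bootstrap never gets off the ground: the base case $k=0\to 1$ already requires a bound $\varphi\le C\,t^{\sigma}$ with $\sigma\ge -1/2$ on the cutoff's support, which is strictly better than the hypothesis. There is a second, independent obstruction in the inductive step: from $\chi L\varphi\le \alpha t_0^{-1}\cdot A t_0^{(k+1)/2}=\alpha A t_0^{(k-1)/2}$ you only get $\big(\tfrac{k+1}{2}-\alpha\big)A t_0^{(k-1)/2}\le \dots$, so even granting a seed you need $k>2\alpha-1$ for the coefficient to have the right sign; you do not address this.

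The missing idea in Lee--Tam is to raise the cutoff to a power $N=N(m)$ and run a \emph{single} barrier argument with the test function $\chi^N\varphi - A t^m$. The point is that at a touching point the constraint $\chi^N\varphi=A t_0^m$, combined with $\varphi\le\alpha t_0^{-1}$, forces $\chi\ge (A t_0^{m+1}/\alpha)^{1/N}$, so the touching point cannot be near $\partial\,\mathrm{supp}\,\chi$. The cutoff errors then scale like $N\chi^{N-1}\varphi\,(\partial_t-\Delta)\chi \lesssim A\,t_0^{m-\frac12-\frac{m+1}{N}}$ and $N^2\chi^{N-2}\varphi|\nabla\chi|^2\lesssim A\,t_0^{m-\frac{m+1}{N}}$; choosing $N>2(m+1)$ makes both exponents strictly larger than $m-1$, and taking $m>\alpha$ (harmless, since $t^m\le t^{m'}$ for $t\le 1$ gives the result for all smaller $m'$) makes the coefficient $m-\alpha$ positive, yielding the contradiction for $t_0\in(0,\hat T]$ with $\hat T=\hat T(n,m,\alpha)$. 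This replaces your $O(m)$ iterations on shrinking balls with one application of the maximum principle, and it is what actually makes the argument close; your linear-cutoff bootstrap, as written, cannot be seeded.
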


\begin{rem}
In fact following the method in \cite{LeeTam}, the asymptotic of $\varphi$ in Proposition~\ref{prop:localMP} can be improved to $\exp(-t^{-\delta})$ as $t\to 0^+$, for any $\delta<1$. 
\end{rem}

\section{singular metrics on $\mathbb{S}^m$ and its smoothing}\label{sec:sphere-metric}
In this section, we will construct expanders with different curvature behaviour at infinity. Expanding gradient Ricci solitons coming out of  cones over smooth link $X$ with sufficiently positive curvature are considered by Deruelle using perturbation method. We start by  recalling the  result of Deruelle:

\begin{thm}[\cite{Deruelle2016}]\label{thm:der}
    Let $h$ be a smooth metric on $\mathbb{S}^{n-1}$  satisfying $\Rm(h)\ge 1$ with strict inequality somewhere in $\mathbb{S}^{n-1}$. Then there exists a complete noncompact expanding gradient Ricci soliton with $\Rm>0$ which is smoothly asymptotic to $C(\mathbb{S}^{n-1})$.
\end{thm}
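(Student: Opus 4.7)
The plan is to realize the expander as a small perturbation of a smoothly capped cone, exploiting the positivity hypothesis on $h$ to set up an elliptic perturbation problem at infinity. First, take $g_c := dr^2 + r^2 h$ on $C(\mathbb{S}^{n-1})$ and cap off its tip smoothly to obtain a complete metric $\bar g$ on $M := \R^n$ that agrees with $g_c$ outside a compact set. Since $\Rm(h) \geq 1$, the standard Gauss-type formula for the curvature of a Riemannian cone yields $\Rm(g_c) \geq 0$ on the conical portion, with strict positivity inherited at points where $\Rm(h) > 1$. By choosing, for instance, a rotationally symmetric cap whose curvature operator is pointwise positive, one can arrange $\Rm(\bar g) > 0$ everywhere.

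Next, set up the soliton equation $-2\Ric(g) - g + \mathcal{L}_{\nabla^g f} g = 0$ via the ansatz $g = \bar g + k$, $f = r^2/4 + \psi$, breaking the diffeomorphism gauge freedom with a DeTurck-type trick. This turns the soliton condition into a coupled nonlinear elliptic system for $(k,\psi)$ whose linearization at $(\bar g, r^2/4)$ is a drift Lichnerowicz operator of the form $\Delta_{\bar g} k - \nabla_{\nabla \bar f} k + $ lower-order curvature terms, a standard object on asymptotically conical manifolds.

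The main obstacle is to invert this linearization on a suitable weighted H\"older space $C^{k,\alpha}_{\delta}$ of symmetric $2$-tensors decaying like $r^{-\delta}$. The Fredholm theory on asymptotically conical manifolds applies provided $\delta$ avoids the indicial roots of the drift Lichnerowicz operator at infinity, and the drift term $-\tfrac12 r\p_r$ supplies enough coercivity to rule out a kernel via a weighted integration-by-parts argument. Once invertibility is established, the implicit function theorem produces a unique small solution $(k,\psi)$ decaying at infinity, yielding a smooth expander $g$ smoothly asymptotic to $(C(\mathbb{S}^{n-1}), g_c)$ in the sense of \eqref{cona} with $f - r^2/4$ bounded.

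Finally, to promote $\Rm(\bar g) > 0$ to $\Rm(g) > 0$, I would consider the canonical Ricci flow $g(t) = t\phi_t^* g$ associated to the expander and invoke Hamilton's maximum principle: the cone $\{\Rm \geq 0\}$ of algebraic curvature operators is invariant under the Ricci flow ODE, and the strong maximum principle upgrades non-strict positivity to strict positivity as soon as it holds somewhere, which is guaranteed by the smoothing step. Evaluating at $t=1$ then delivers $\Rm(g) > 0$, completing the proof.
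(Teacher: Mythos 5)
This theorem is not proved in the paper at all: it is quoted from Deruelle's work \cite{Deruelle2016}, so the relevant comparison is with Deruelle's actual construction. There your proposal has a genuine gap at its core step. You set up the soliton equation as a perturbation $g=\bar g+k$, $f=r^2/4+\psi$ around a capped cone and invoke the implicit function theorem, but $(\bar g, r^2/4)$ is not an approximate solution with small error: on the conical part one has $\nabla^2(r^2/4)=\tfrac12 g_c$, so the soliton equation reduces there to $\Ric(g_c)=0$, and the error term is exactly $\Ric(g_c)$. This decays only quadratically, which is the scale-invariant (critical) rate, and its size in any weighted norm adapted to that decay is of order $|\Ric(h)-(n-2)h|$, i.e.\ of definite size for a general link metric with $\Rm(h)\ge 1$. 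The implicit function theorem therefore only produces expanders for $h$ in a small neighbourhood of the round metric; there is no smallness parameter that covers the general hypothesis of the theorem. Invertibility of the drift Lichnerowicz operator (even granting your coercivity claim, which is itself delicate since the linearization is taken at a non-solution and the natural Fredholm setting involves the Gaussian weight $e^{-r^2/4}$ and the indicial roots at infinity) does not repair this: IFT needs both an invertible linearization \emph{and} a small error, and the second ingredient is missing.

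What Deruelle actually does is a continuity method in the link metric: he connects $h$ to the round metric $g_{\mathbb{S}^{n-1}}$ through a path of metrics with $\Rm\ge 1$, starting from the explicit rotationally symmetric expanders over round cones, and shows the set of parameters admitting an expander with $\Rm\ge 0$ asymptotic to the corresponding cone is open and closed. Your weighted Fredholm analysis is essentially the openness step of that scheme; the missing — and hardest — ingredient is closedness, which requires a priori estimates (curvature and derivative bounds exploiting $\Rm\ge 0$, non-collapsing, and the conical asymptotics) together with a compactness theorem for such expanders. Finally, your last step is also too quick: strict positivity $\Rm>0$ does not follow merely from positivity "somewhere" plus Hamilton's maximum principle; one must run the strong maximum principle for the curvature operator along the induced flow and rule out a parallel null distribution (equivalently a local splitting), which is where the hypothesis that $\Rm(h)>1$ somewhere enters, as in the argument the paper cites from \cite{GS2018}.
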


We will use Theorem~\ref{thm:der} with different metric $h$ on $\mathbb{S}^m$ to construct expanders with different behaviors at spatial infinity. 
Let $m\ge 2$. We want to construct smooth positively curved metric metric $h$ on $\mathbb{S}^{m}$ which approximates a Riefenburg metric on $\mathbb{S}^{m}$ of low regularity and unbounded curvature. To do this, we will look for a rotationally symmetric metric $h$ on $(0,L)\times \mathbb{S}^{m-1},L>0$ of the form
\[
h:=dr^2+a(r)^2\, g_{\mathbb{S}^{n-1}},
\]
where $a:(0,L)\to (0,\infty)$ is a smooth positive function and $g_{\mathbb{S}^{m-1}}$ is the standard metric with constant curvature $1$ on the unit sphere $\mathbb{S}^{m-1}$ for $m\ge 3$ or the flat metric on $\mathbb{S}^1$ for $m=2$. The metric can be made complete by adding two points at $r=0$ and $r=L$ respectively, and becomes a smooth metric on $\mathbb{S}^{m}$ if $a$ is a smooth function on $[0,L]$ and satisfies the following boundary conditions (see \cite[Section 1.4.4, Section 4.3.4]{PetersenBook}):
\begin{equation}\label{bdry}
  a^{(2\ell)}(0)=0,\qquad a'(0)=1,\qquad a^{(2\ell)}(L)=0,\qquad a'(L)=-1,  
\end{equation}
for all nonnegative integer $\ell$. The eigenvalues of $\Rm(h)$ are given by $-a''/a$ and $(1-(a')^2)/a^2$ for $m\ge 3$, and solely by $-a''/a$ for $m=2$, see \cite[Section 4.2.3]{PetersenBook}. Therefore, in order to ensure $\Rm(h)\geq 1$, we require 
\begin{equation}\label{eqn:curva} 
\left\{
\begin{array}{ll}
\displaystyle -\frac{a''(r)}{a(r)}&\ge 1 \\[4mm]
 \displaystyle\frac{1-(a'(r))^2}{a^2(r)}&\ge 1, \,\,\;\text{if}\;\; n\geq 3
\end{array}
\right.
\end{equation}
for any $r\in (0,L)$. 

\medskip
\subsection{Smoothing a Reifenburg metric}\label{subsec:Reifenburg}

We want to construct a sequence of metrics on sphere which has $\Rm\geq 1$, spherical somewhere and increasingly singular around the spherical point. On $\mathbb{S}^m$, we will parametrize it using warped product coordinate as described above. For $k>1$, we let $L=\pi/k$ and we look for a metric $h=dr^2+a^2(r)\,g_{\mathbb{S}^{m-1}}$ on $(0,\pi/k)\times \mathbb{S}^{m-1}$. The basic tool we use is the warping function $$\b_{\e,\ell}(r)=(1-\e r)\sin(\ell r)/\ell$$ where $\e\geq 0$ and $\ell\geq 1$. We will eventually choose the warped product function $a(r)$ which transited from $\b_{0,1}$ to  $\b_{\e,k}$ (approximately) and finally to $\b_{0,k}$ for $\e\geq 0$ and $k>1$. The boundary conditions $\b_{0,1}$ and $\b_{0,k}$ are imposed so that the resulting metric $h$ is smooth on $\mathbb{S}^m$ while $\b_{\e,k}$ with $\e>0$ corresponds to singular metrics at $r=0$ on $\mathbb{S}^m$ but is still Reifenburg there.

We now make it precise. Fix a smooth non-increasing function $\phi$ on $[0,+\infty)$ which is identical to $1$ on $[0,1]$ and vanishes outside $[0,2]$. For $\lambda>0$, we will denote $\phi_\lambda(r)=\phi(r/\lambda)$ for convenience. For $\e\geq 0,k>1$  and $ \pi/8k>\delta>0$, consider the ODE:
\begin{equation*}
\left\{
\begin{array}{ll}
\gamma_{\delta,\e}'=\phi_\delta \b_{0,1}'+(1-\phi_\delta)\b_{\e,k}';\\[3mm]
\gamma_{\delta,\e}(0)=0
\end{array}
\right.
\end{equation*}
on $[0,\pi/k]$. It follows from our choice of $\phi_\delta$ that $\gamma_{\delta,\e}(r)\equiv \b_{0,1}(r)$ on $[0,\delta]$. 
Finally, we truncate it back to $\b_{0,k}$ nearby $r=\pi/k$ by defining 
\begin{equation*}
a_{\delta,\e}=\phi_{\pi/8k}\cdot \gamma_{\delta,\e}+(1-\phi_{\pi/8k}) \cdot \b_{0,k}.
\end{equation*}
We denote the resulting metric on $\mathbb{S}^m$ by $h_{\delta,\e}=dr^2+a_{\delta,\e}^2(r)\,g_{\mathbb{S}^{m-1}}$. This is a smooth metric on $\mathbb{S}^m$ for $\delta\in (0,\pi/4k)$, since 
\begin{enumerate}
\item[(i)] $a_{\delta,\e}(r)=\sin(r)$ for $r\in [0,\delta]$;
\item[(ii)]$a_{\delta,\e}(r)=\sin(kr)/k$ for $ r\in [\pi/4k,\pi/k]$.
\end{enumerate}

We start with some preliminary computations. 
\begin{lma} \label{lma:ode-compare}
There exists $\e_0(k)>0$ such that for all $r\in(0,\pi/4k)$ and $\e\in [0,\e_0)$,
\begin{equation*}
\begin{split}
\b_{\e,k}'&=\cos(kr) -\e \left[r\cos(kr)+\frac1k \sin(kr) \right]>0;\\
\b_{\e,k}''&=-k\sin(kr)-\e \left[2\cos(kr)-kr \sin(kr)\right]<0.
\end{split}
\end{equation*}
In particular, $0<\b_{\e,k}'\leq \b_{0,k}'$ and $\b_{\e,k}''\leq \b_{0,k}''<0$ on $(0,\pi/4k)$.
\end{lma}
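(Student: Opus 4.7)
The plan is to obtain both displayed formulas through a straightforward product-rule differentiation of $\beta_{\varepsilon,k}(r) = (1-\varepsilon r)\sin(kr)/k$; no subtlety enters at this step, and the two claimed identities fall out once terms are collected. Once the formulas are in hand, everything reduces to sign analysis on the interval $(0,\pi/(4k))$, where the parameter $kr$ lies in $(0,\pi/4)$ and I can use the uniform bounds $\cos(kr) \ge \cos(\pi/4) = \sqrt{2}/2$ and $0 < \sin(kr) \le \sin(\pi/4) = \sqrt{2}/2$ together with $r < \pi/(4k)$.

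For the first inequality $\beta_{\varepsilon,k}' > 0$, I would bound the perturbation $r\cos(kr) + \sin(kr)/k$ uniformly from above by some explicit constant $C_1(k)$ (for instance $(\pi/4 + \sqrt{2}/2)/k$). Since the unperturbed term $\cos(kr)$ is bounded below by $\sqrt{2}/2$, choosing any $\varepsilon_0(k) \in (0,\, \sqrt{2}/(2C_1(k))]$ forces $\beta_{\varepsilon,k}' > 0$ for all $\varepsilon \in [0,\varepsilon_0)$. For the second inequality $\beta_{\varepsilon,k}'' < 0$, I expect no smallness hypothesis to be needed: the bracket $2\cos(kr) - kr\sin(kr)$ satisfies $2\cos(kr) \ge \sqrt{2}$ while $kr\sin(kr) < (\pi/4)(\sqrt{2}/2) = \pi\sqrt{2}/8 < \sqrt{2}$, hence is strictly positive on the whole interval. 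Therefore $-\varepsilon[2\cos(kr) - kr\sin(kr)] \le 0$ for any $\varepsilon \ge 0$, and combined with the strictly negative term $-k\sin(kr)$ one obtains $\beta_{\varepsilon,k}'' < 0$ automatically.

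The ``in particular'' clauses then follow immediately from the identities $\beta_{\varepsilon,k}' - \beta_{0,k}' = -\varepsilon[r\cos(kr) + \sin(kr)/k]$ and $\beta_{\varepsilon,k}'' - \beta_{0,k}'' = -\varepsilon[2\cos(kr) - kr\sin(kr)]$, both nonpositive on $(0,\pi/(4k))$ by the positivity of the brackets just established, combined with $\beta_{0,k}' = \cos(kr) > 0$ and $\beta_{0,k}'' = -k\sin(kr) < 0$ there. Honestly, there is no substantive obstacle: the lemma is a bookkeeping exercise, and the only genuine choice is the explicit value of $\varepsilon_0(k)$, which one picks small enough so that the linear-in-$\varepsilon$ perturbation cannot overwhelm the fixed positive quantity $\cos(kr)$ on $(0,\pi/(4k))$.
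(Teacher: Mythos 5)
Your proposal is correct and uses the approach the paper implicitly intends; the lemma is stated without proof as a routine computation, and your product-rule derivation plus sign analysis on $kr\in(0,\pi/4)$ is exactly what is needed. The key observations — that $\cos(kr)$ is bounded below by $\sqrt{2}/2$ so a small $\varepsilon_0(k)$ keeps $\beta_{\varepsilon,k}'>0$, and that $2\cos(kr)-kr\sin(kr)>0$ on this interval (the same inequality the paper invokes later in the proof of Lemma 4.3) so $\beta_{\varepsilon,k}''<0$ holds for every $\varepsilon\ge 0$ — are the right ones, and the ``in particular'' comparisons follow immediately from the sign of the two brackets.
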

\medskip

We now claim that $h_{\e,\delta}$ has the desired curvature lower bound, for suitable choices of $\delta,\e$. We will split our discussion into several regions. 

\begin{prop}\label{prop:sphere-1}
For $\e\in [0,\e_0)$, $h_{\delta,\e}=dr^2+a_{\delta,\e}^2(r)g_{\mathbb{S}^{m-1}}$ is a smooth metric on $\mathbb{S}^m$ satisfying 
\begin{enumerate}
\item[(a)]
$\mathrm{sec}(h_{\delta,\e})\equiv 1$ on   $(0,\delta]\times \mathbb{S}^{m-1}$;
\item[(b)] $\Rm(h_{\delta,\e})\geq 1$ on $(\delta,2\delta]\times \mathbb{S}^{m-1}$.
\end{enumerate}
\end{prop}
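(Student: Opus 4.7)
My plan is to verify part (a) directly from the construction and then reduce part (b) to showing $-a''/a \ge 1$ pointwise on $(\delta, 2\delta]$, after which the second eigenvalue bound $(1-(a')^2)/a^2 \ge 1$ (needed when $m \ge 3$) will follow by a simple energy identity. For part (a), when $r \in (0,\delta]$ we have $\phi_\delta(r) = 1$ and, since $\delta < \pi/(8k)$, also $\phi_{\pi/8k}(r) = 1$; hence $a_{\delta,\e}$ solves $\gamma' = \cos r$ with $\gamma(0) = 0$, so $a_{\delta,\e}(r) = \sin r$ there, and both curvature eigenvalues are identically $1$.

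For part (b), after possibly further shrinking $\delta$ so that $\phi_{\pi/8k}\equiv 1$ on $(\delta,2\delta]$, we have $a = \gamma_{\delta,\e}$. Writing $\b_0 = \b_{0,1}$ and $\b_k = \b_{\e,k}$, I would differentiate $\gamma' = \phi_\delta\, \b_0' + (1-\phi_\delta)\, \b_k'$ and then combine: (i) $\phi_\delta' \le 0$ together with $\b_0' \ge \b_k'$ on $(0,\pi/(4k)]$ (from $\cos r \ge \cos(kr)$ and Lemma~\ref{lma:ode-compare}); (ii) the spherical identity $\b_0'' = -\b_0$; and (iii) the direct algebraic check $-\b_k'' \ge k\b_k$ on $(0,\pi/(4k)]$, which rearranges to $(k-1)(1-\e r)\sin(kr) + 2\e\cos(kr) \ge 0$ and is manifestly valid for $\e$ small. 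Together these yield
\[
-a'' \;\ge\; \phi_\delta\, \b_0 + (1-\phi_\delta)\, k\, \b_k.
\]
The key remaining comparison is $k\b_k \ge \b_0$ on $(0,\pi/(4k)]$, i.e.\ $(1-\e r)\sin(kr) \ge \sin r$: for $\e = 0$ this is $\sin(kr) \ge \sin r$, valid since $k > 1$ and $kr \le \pi/4$; it persists for $\e \in [0, \e_0(k))$ with $\e_0$ depending only on $k$, because $\sin(kr) - \sin r \sim (k-1)r$ dominates the perturbation $\e r \sin(kr) = O(\e r^2)$ uniformly on this range. Combined with the upper bound $a \le \b_0$ — obtained by integrating $a' = \phi_\delta \b_0' + (1-\phi_\delta) \b_k' \le \b_0'$ from $0$ — these give $-a''/a \ge \b_0/a \ge 1$.

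For the second eigenvalue, I would set $F(r) := a(r)^2 + a'(r)^2$, so that $F' = 2a'(a + a'')$. Since $a' \ge 0$ (a convex combination of positive derivatives, again by Lemma~\ref{lma:ode-compare}) and $a + a'' \le 0$ by the previous step, $F$ is non-increasing; combined with $F(0) = 1$, this gives $(a')^2 + a^2 \le 1$, i.e.\ $(1-(a')^2)/a^2 \ge 1$, which finishes (b). The main obstacle lies in the pointwise inequality chain in the previous step: because the comparison function $\b_0$ has curvature exactly saturating the threshold $1$, there is no slack from that side, and one must exploit the strictly improved curvature of $\b_k$ — captured by $-\b_k'' \ge k\b_k$ with $k > 1$ — to absorb the cross terms generated by $\phi_\delta'$ and guarantee that the convex combination $\phi_\delta\, \b_0 + (1-\phi_\delta)\, k\, \b_k$ still dominates $a$.
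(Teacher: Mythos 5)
Your proof is correct, but for part (b) it takes a genuinely different route from the paper's. For the sectional-curvature bound $-a''/a\ge 1$, the paper estimates $-\gamma_{\delta,\e}'' \ge -(1-\phi_\delta)\b_{0,k}'' - \phi_\delta\b_{0,1}'' = k^2(1-\phi_\delta)\b_{0,k} + \phi_\delta\b_{0,1}$, i.e.\ it compares $\b_{\e,k}''$ to the \emph{unperturbed} $\b_{0,k}''$ via Lemma~\ref{lma:ode-compare} and then only needs the $\e$-free inequality $k\sin(kr)\ge \sin r$; you instead prove the differential inequality $-\b_{\e,k}''\ge k\,\b_{\e,k}$ for the perturbed warping function itself and then need $k\,\b_{\e,k}\ge\b_{0,1}$, i.e.\ $(1-\e r)\sin(kr)\ge\sin r$. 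This is fine, but note that it injects a second, genuinely $\e$-dependent smallness constraint: the $\e_0(k)$ you require (roughly $\e_0 \lesssim (k-1)$) is \emph{not} automatically the same as, and for $k$ near $1$ is strictly smaller than, the $\e_0(k)$ of Lemma~\ref{lma:ode-compare}; you should explicitly allow yourself to shrink $\e_0$ rather than inherit it from the lemma. For the isotropic eigenvalue $(1-(a')^2)/a^2\ge 1$, the paper argues pointwise and independently of the first eigenvalue bound, via $1-(\gamma')^2 \ge 1-(\b_{0,1}')^2 = \b_{0,1}^2\ge \gamma^2$, using only $0\le\gamma'\le\b_{0,1}'$ and $\gamma\le\b_{0,1}$; you instead use the monotone energy $F=a^2+(a')^2$, which is a clean conservation-law viewpoint but has the minor drawback of being logically downstream of the first eigenvalue bound (it needs $a+a''\le 0$). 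Both routes are valid; the paper's is slightly more economical and uniform in $\e$, while yours isolates the curvature-improving mechanism of $\b_{\e,k}$ more transparently.
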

\begin{proof}
By the construction, $h_{\delta,\e}$ is a smooth metric on $\mathbb{S}^m$ such that  $h_{\delta,\e}$ is spherical on $(0,\delta]$. Therefore, it suffices to verify \eqref{eqn:curva} on $[0,2\delta]$.
On $[0,2\delta]$,  we have $a_{\delta,\e}=\gamma_{\delta,\e}$ in which $
0\leq \gamma'_{\delta,\e}\leq \b_{0,1}'$
thanks to Lemma~\ref{lma:ode-compare} and $\b_{0,k}'\leq \b_{0,1}'$ for $0\leq r\leq \pi/4k$. Since $\gamma_{\delta,\e}(0)=\b_{0,1}(0)$, we have 
\begin{equation}\label{eqn:gamma-1}
0\leq \gamma_{\delta,\e}\leq \b_{0,1}
\end{equation}
 on $[0,2\delta]$. In particular, 
\begin{equation}
\begin{split}
1-(\gamma_{\delta,\e}')^2&\geq 1-(\b_{0,1}')^2=\b_{0,1}^2\geq (\gamma_{\delta,\e})^2.
\end{split}
\end{equation}
This justifies the first inequality. 
For the second one, taking derivatives to the ODE yields
\begin{equation}
\begin{split}
-\gamma_{\delta,\e}''&=-\b_{\e,k}''-\phi_\delta' (\b_{0,1}'-\b_{\e,k}')-\phi_\delta (\b_{0,1}''-\b_{\e,k}'')\\
&\geq -\b_{\e,k}''-\phi_\delta (\b_{0,1}''-\b_{\e,k}'')\\
&\geq -(1-\phi_\delta) \b_{0,k}''-\phi_\delta \b_{0,1}''\\
&=k^2(1-\phi_{\delta})\b_{0,k}+\phi_\delta \b_{0,1}\\
&\geq \b_{0,1}\geq \gamma_{\delta,\e}
\end{split}
\end{equation}
where we have used $\phi_\delta'\leq 0$ and Lemma~\ref{lma:ode-compare}.
\end{proof}
\medskip

We now wish to obtain a more precise curvature lower bound for $r\geq 2\delta$. To do this, we need a better upper bound on $\gamma_{\delta,\e}$ and $a_{\delta,\e}$, provided that $\delta$ is small enough.
\begin{lma}\label{lma:bdd-gamma}
For any $\sigma>0$, there exists $\delta_0(k,\sigma)>0$ such that if $\delta\in (0,\delta_0)$, then 
\begin{enumerate}
\item[(a)] $\gamma_{\delta,\e}\leq \b_{0,1}$ for $r\in [0,2\delta]$ and;
\item[(b)]$\gamma_{\delta,\e}\geq \b_{\e,k}$ for $r\in [0,2\delta]$ and;
\item[(c)]$\b_{\e,k}\leq \gamma_{\delta,\e}\leq (1+\sigma)\b_{0,k}$ for $r\in [2\delta,\pi/4k]$.
\end{enumerate}
\end{lma}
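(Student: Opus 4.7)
The plan is to exploit the ODE defining $\gamma_{\delta,\e}$ together with the one-sided comparisons from Lemma~\ref{lma:ode-compare} and the elementary inequality $\sin(kr)/k \leq \sin r$ on $[0,\pi/k]$ (which follows by noting that the difference vanishes at $r=0$ and has nonnegative derivative $\cos r - \cos(kr)$ on $[0,\pi/k]$, since $0\leq r\leq kr\leq \pi$ and $\cos$ is decreasing there). Part (a) has actually already been recorded in \eqref{eqn:gamma-1} in the proof of Proposition~\ref{prop:sphere-1}, so no new argument is needed for it.

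For (b) on $[0,\delta]$, since $\gamma_{\delta,\e}\equiv \sin r$ by construction and $\b_{\e,k}(r)=(1-\e r)\sin(kr)/k \leq \sin(kr)/k \leq \sin r$, the inequality is immediate. To extend to $[\delta,2\delta]$, I would differentiate the defining ODE to get
\[
\gamma_{\delta,\e}' - \b_{\e,k}' = \phi_\delta\,(\b_{0,1}' - \b_{\e,k}'),
\]
and observe that by Lemma~\ref{lma:ode-compare} together with $\cos(kr)\leq \cos r$, the right-hand side is nonnegative; integrating from $r=\delta$ and using the inequality already established there closes part (b).

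For (c), the lower bound comes for free: on $[2\delta,\pi/4k]$ one has $\phi_\delta \equiv 0$, so $\gamma_{\delta,\e}' = \b_{\e,k}'$ there and $\gamma_{\delta,\e}-\b_{\e,k}$ is constant on this interval, with nonnegative value at $r=2\delta$ by (b). The real work is the upper bound, and my plan is to combine two ingredients. First, Lemma~\ref{lma:ode-compare} gives $\b_{\e,k}'\leq \b_{0,k}'$, so $\gamma_{\delta,\e}-\b_{0,k}$ is non-increasing on $[2\delta,\pi/4k]$, reducing the problem to bounding this difference at $r=2\delta$. Second, the crude estimate $\gamma_{\delta,\e}'\leq \b_{0,1}'$ (which follows from the chain of inequalities already used) yields $\gamma_{\delta,\e}(2\delta)\leq \sin(2\delta)$, and a Taylor expansion of $\sin(2\delta)-\sin(2k\delta)/k$ gives an error of order $O_k(\delta^3)$. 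Combined with the elementary lower bound $\b_{0,k}(r)\geq c(k)\delta$ on $[2\delta,\pi/4k]$ for $\delta$ small, this produces the ratio estimate $\gamma_{\delta,\e}/\b_{0,k}\leq 1+C(k)\delta^2$, which is $\leq 1+\sigma$ once $\delta$ is sufficiently small depending on $k$ and $\sigma$.

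The main obstacle is this final quantitative step, since the other parts are essentially formal consequences of the ODE and the monotonicity in Lemma~\ref{lma:ode-compare}. The key observation that makes the estimate work is that replacing $\gamma_{\delta,\e}'$ by the rougher majorant $\b_{0,1}'$ still leaves an error of order only $\delta^3$ at $r=2\delta$, which is small enough to be absorbed by the $\sigma$-slack after dividing by $\b_{0,k}(r)\gtrsim \delta$; note in particular that the resulting threshold $\delta_0$ depends on $k$ and $\sigma$ but not on $\e$, as anticipated by the statement.
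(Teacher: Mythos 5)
Your proposal is correct and follows essentially the same route as the paper's proof: (a) and (b) are handled exactly as in the paper (part (a) is the recorded bound, part (b) comes from $\gamma_{\delta,\e}'\geq \b_{\e,k}'$ via $\b_{\e,k}'\le \b_{0,k}'\le \b_{0,1}'$ on $[0,\pi/4k]$), and for (c) you use the same two ingredients, namely $\gamma_{\delta,\e}(2\delta)\le \sin(2\delta)$ and $\gamma_{\delta,\e}'=\b_{\e,k}'\le \b_{0,k}'$ on $[2\delta,\pi/4k]$. The only cosmetic difference is the bookkeeping in the upper bound of (c): you propagate the additive quantity $\gamma_{\delta,\e}-\b_{0,k}$ (non-increasing, of size $O_k(\delta^3)$ at $r=2\delta$) and divide by $\b_{0,k}\ge c(k)\delta$, whereas the paper compares the values at $r=2\delta$ up to the factor $(1+\sigma)$ and integrates $\gamma_{\delta,\e}'\le (1+\sigma)\b_{0,k}'$; both yield a threshold $\delta_0(k,\sigma)$ independent of $\e$.
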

\begin{proof}
The conclusion (a) follows from \eqref{eqn:gamma-1} while (b) follows from $\gamma_{\delta,\e}(0)=\b_{\e,k}(0)=0$ and 
$$\gamma_{\delta,\e}'\geq \b_{\e,k}',\quad \;\;\forall r\in [0,\pi/4k).$$

It remains to show (c). Recall from Lemma~\ref{lma:ode-compare} that for $\pi/4k\geq r\geq 2\delta$ and $\e\geq 0$, $\gamma_{\delta,\e}$ satisfies 
\begin{equation}
\begin{split}
\gamma'_{\delta,\e}=\b_{\e,k}'&=\cos(kr) -\e \left[r\cos(kr)+\frac1k \sin(kr) \right]\\
&\leq \b_{0,k}'< (1+\sigma)\b_{0,k}'
\end{split}
\end{equation}
where we have used $2\cos(x)\geq x \sin(x)$ for $x\in (0,\pi/4]$. Using (a), we see that there exists $\delta_0(k,\sigma)>0$ such that if $\delta\in(0,\delta_0)$,
\begin{equation}
\begin{split}
\gamma_{\delta,\e}(2\delta)&\leq \b_{0,1}(2\delta)=\sin(2\delta)\\
&\leq (1+\sigma)\cdot \sin(2k\delta)/k=(1+\sigma)\b_{0,k}(2\delta).
\end{split}
\end{equation}
The upper bound in (c) follows from integration from $r=2\delta$. The lower bound follows from the fact that $\gamma_{\delta,\e}(2\delta)\geq \b_{\e,k}(2\delta)$ and $\gamma_{\delta,\e}'=\b_{\e,k}'$ on $[2\delta,\pi/4k]$. This completes the proof.
\end{proof}

We now estimate the curvature lower bound.
\begin{prop}\label{prop:sphere-2}
For all $\sigma>0$, there exists $\delta_0(k,\sigma),\e_0(k,\sigma)>0$ such that if $\delta\in(0,\delta_0)$, then the metric $h_{\delta,\e}$ is smooth on $\mathbb{S}^m$ satisfying 
\begin{enumerate}
\item[(a)]$\Rm(h_{\delta,\e})\geq k^2(1+\sigma)^{-1}$ on $[2\delta,\pi/4k]\times \mathbb{S}^{m-1}$;
\item[(b)] $\mathrm{sec}(h_{\delta,\e})\equiv k^2$ on $[\pi/4k,\pi/k)\times \mathbb{S}^{m-1}$.
\end{enumerate}
Together with Proposition~\ref{prop:sphere-1}, we have $\Rm(h_{\delta,\e})\geq 1$ on $\mathbb{S}^m$.
\end{prop}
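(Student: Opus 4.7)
The plan is to dispatch part (b) immediately from the construction and then handle part (a) by splitting $[2\delta, \pi/4k]$ along the cutoff structure. On $[\pi/4k, \pi/k)$ the cutoff $\phi_{\pi/8k}$ vanishes identically, so $a_{\delta,\varepsilon}(r) = \beta_{0,k}(r) = \sin(kr)/k$, which is exactly the warping profile of a round sphere of radius $1/k$; hence $\mathrm{sec}(h_{\delta,\varepsilon}) \equiv k^2$ and (b) holds. For (a), I split into a pure subinterval $I_1 := [2\delta, \pi/8k]$, on which $\phi_\delta \equiv 0$ and $\phi_{\pi/8k} \equiv 1$ so that $a_{\delta,\varepsilon} = \gamma_{\delta,\varepsilon}$ with $\gamma_{\delta,\varepsilon}' = \beta_{\varepsilon,k}'$ and $\gamma_{\delta,\varepsilon}'' = \beta_{\varepsilon,k}''$, and a transition subinterval $I_2 := [\pi/8k, \pi/4k]$ on which $a_{\delta,\varepsilon}$ is the $\phi_{\pi/8k}$-convex combination of $\gamma_{\delta,\varepsilon}$ and $\beta_{0,k}$.

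On $I_1$ the two eigenvalues of the curvature operator reduce to $-\beta_{\varepsilon,k}''/\gamma_{\delta,\varepsilon}$ and $(1 - (\beta_{\varepsilon,k}')^2)/\gamma_{\delta,\varepsilon}^2$. Lemma~\ref{lma:ode-compare} gives $-\beta_{\varepsilon,k}'' \geq -\beta_{0,k}'' = k\sin(kr) = k^2 \beta_{0,k}$ and $(\beta_{\varepsilon,k}')^2 \leq (\beta_{0,k}')^2 = 1 - k^2\beta_{0,k}^2$, while Lemma~\ref{lma:bdd-gamma}(c), applied with an auxiliary parameter $\sigma'$ chosen so that $(1+\sigma')^2 \leq 1+\sigma$, yields $\gamma_{\delta,\varepsilon} \leq (1+\sigma')\beta_{0,k}$ once $\delta < \delta_0(k,\sigma)$. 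Dividing then gives both eigenvalues $\geq k^2/(1+\sigma)$ on $I_1$.

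On $I_2$ the strategy is to prove $\gamma_{\delta,\varepsilon}$ is $C^2$-close to $\beta_{0,k}$ throughout $[2\delta,\pi/4k]$ when $\delta,\varepsilon$ are small, and then transfer this closeness to $a_{\delta,\varepsilon}$ via the product rule. Since $\gamma_{\delta,\varepsilon}' = \beta_{\varepsilon,k}'$ on this range, Lemma~\ref{lma:ode-compare} immediately gives $|\gamma_{\delta,\varepsilon}' - \beta_{0,k}'|$ and $|\gamma_{\delta,\varepsilon}'' - \beta_{0,k}''|$ both $O(\varepsilon)$ with constants depending only on $k$. For the baseline at $r = 2\delta$ I would use the sandwich $\beta_{\varepsilon,k}(2\delta) \leq \gamma_{\delta,\varepsilon}(2\delta) \leq \beta_{0,1}(2\delta)$ from Lemma~\ref{lma:bdd-gamma}(a)--(b), together with the Taylor expansions $\beta_{0,1}(2\delta) - \beta_{0,k}(2\delta) = O(\delta^3)$ and $\beta_{0,k}(2\delta) - \beta_{\varepsilon,k}(2\delta) = O(\varepsilon\delta)$, to get $|\gamma_{\delta,\varepsilon}(2\delta) - \beta_{0,k}(2\delta)| = O(\delta^3 + \varepsilon\delta)$. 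Integrating yields $\|\gamma_{\delta,\varepsilon} - \beta_{0,k}\|_{C^2([2\delta,\pi/4k])} \to 0$ as $\delta,\varepsilon \to 0$. Expanding $a_{\delta,\varepsilon}^{(j)}$ for $j = 0,1,2$ by the product rule (noting $\phi_{\pi/8k}'$ and $\phi_{\pi/8k}''$ are bounded in terms of $k$ alone) transfers this to $a_{\delta,\varepsilon}^{(j)} - \beta_{0,k}^{(j)} \to 0$ uniformly on $I_2$. Because $\beta_{0,k} \geq \sin(\pi/8)/k > 0$ there, both eigenvalues $-a_{\delta,\varepsilon}''/a_{\delta,\varepsilon}$ and $(1-(a_{\delta,\varepsilon}')^2)/a_{\delta,\varepsilon}^2$ become continuous perturbations of $k^2$, so after shrinking $\delta_0,\varepsilon_0$ further they exceed $k^2/(1+\sigma)$.

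I expect the transition region $I_2$ to be the main obstacle: $a_{\delta,\varepsilon}''$ there acquires cutoff terms $\phi_{\pi/8k}''(\gamma_{\delta,\varepsilon} - \beta_{0,k})$ and $2\phi_{\pi/8k}'(\gamma_{\delta,\varepsilon}' - \beta_{0,k}')$ whose $\phi$-factors scale like powers of $k$, and these could a priori spoil the curvature bound. The resolution is precisely the joint $C^2$-smallness of $\gamma_{\delta,\varepsilon} - \beta_{0,k}$ outlined above, which is the reason both $\delta$ and $\varepsilon$ must be taken small simultaneously with thresholds depending on $k$ and $\sigma$. Combining (a) and (b) with Proposition~\ref{prop:sphere-1} and restricting to $\sigma \leq k^2 - 1$ (possible since $k > 1$) finally yields $\Rm(h_{\delta,\varepsilon}) \geq 1$ on all of $\mathbb{S}^m$.
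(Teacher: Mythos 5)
Your argument is correct, but it takes a genuinely different route from the paper's. The paper treats the whole interval $[2\delta,\pi/4k]$ uniformly: it rewrites $a_{\delta,\e}'$ and $-a_{\delta,\e}''$ using the substitution $\gamma_{\delta,\e}-\b_{0,k}=(\gamma_{\delta,\e}-\b_{\e,k})-\e r\sin(kr)/k$, so that the problematic cutoff term becomes $\phi_{\pi/8k}'(\gamma_{\delta,\e}-\b_{\e,k})$, which has a definite sign ($\phi'\le 0$ and $\gamma_{\delta,\e}\ge\b_{\e,k}$ by Lemma~\ref{lma:bdd-gamma}(b)). Discarding this nonpositive term gives a clean upper bound $a_{\delta,\e}'\le\cos(kr)+O(\e)$, and similarly for $-a_{\delta,\e}''$, which yields the explicit lower bounds $(k^2-C\e)/(1+\sigma)^2$ and $(k^2-C\e-C\sigma)/(1+\sigma)$ for the two eigenvalues. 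Your proof instead splits $[2\delta,\pi/4k]$ into the pure region $I_1$ (where $a=\gamma$ and the paper's Lemma~\ref{lma:ode-compare} plus Lemma~\ref{lma:bdd-gamma}(c) close the estimate directly) and the transition region $I_2$, where you establish $C^2$-closeness of $\gamma_{\delta,\e}$ to $\b_{0,k}$ from the baseline Taylor estimate at $r=2\delta$ and the $O(\e)$ closeness of derivatives, then push it through the product rule and invoke continuity of the eigenvalue ratios near the reference profile $\b_{0,k}$ (which is bounded below on $I_2$). Both arguments are sound. The paper's sign-structure trick avoids any quantitative estimate of $\gamma_{\delta,\e}-\b_{0,k}$ beyond the sandwich in Lemma~\ref{lma:bdd-gamma} and produces explicit constants; your version is conceptually transparent about where the cutoff terms live and why they are harmless, at the cost of a somewhat more qualitative compactness/continuity step on $I_2$. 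One small correction: $\b_{0,k}(2\delta)-\b_{\e,k}(2\delta)=2\delta\e\sin(2k\delta)/k=O(\e\delta^2)$, not $O(\e\delta)$, though this only strengthens your estimate and does not affect the conclusion.
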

\begin{proof}
On $[2\delta,\pi/4k]$, $\gamma'_{\delta,\e}=\b_{\e,k}'$ so that 
\begin{equation}\label{eqn:1d-a}
\begin{split}
a_{\delta,\e}'&=\phi_{\pi/8k}'\cdot (\gamma_{\delta,\e}-\b_{0,k})+\phi_{\pi/8k}(\gamma_{\delta,\e}'-\b_{0,k}')+\b_{0,k}'\\
&= \phi_{\pi/8k}'\cdot (\gamma_{\delta,\e}-\b_{\e,k})-\e \cdot\phi_{\pi/8k}\cdot \left[r\cos(kr)+\frac{\sin(kr)}{k} \right]\\
&\quad -\phi_{\pi/8k}' \frac{\e r\cdot \sin(kr)}{k}+\cos(kr).
\end{split}
\end{equation}

Fix a sufficiently small $\sigma>0$ and obtain $\delta_0(k,\sigma)$ using Lemma~\ref{lma:bdd-gamma}. Consequently, we see that if $\sigma,\e_0$ are small enough, then $a_{\delta,\e}'>0$ on $[2\delta,\pi/4k]$ if $\delta\in (0,\delta_0)$ and $\e\in [0,\e_0)$. Furthermore using Lemma~\ref{lma:bdd-gamma} again, it gives
\begin{equation}
0<a_{\delta,\e}'\leq -\phi_{\pi/8k}' \frac{\e r\cdot \sin(kr)}{k}+\cos(kr)\leq \cos(kr)+C_0k^{-1}\e r^2
\end{equation}
for some universal constant $C_0>0$. Therefore, 
\begin{equation}
\begin{split}
1-(a_{\delta,\e}')^2&\geq \sin^2(kr)-C_1\e r^2\\
&=(k^2-C_2 \e) \b_{0,k}^2
\end{split}
\end{equation}
for some $C_2>0$ where we have used a rough bound: $\sin(x)/x\geq 1/\sqrt{2}$ for $x\in [0,\pi/4]$. On the other hand, Lemma~\ref{lma:bdd-gamma} implies 
\begin{equation}
\begin{split}
a_{\delta,\e}&=\phi_{\pi/8k}(\gamma_{\delta,\e}-\b_{0,k})+\b_{0,k}\leq (1+\sigma)\b_{0,k}.
\end{split}
\end{equation}

Combine two inequalities, we conclude that 
\begin{equation}
\begin{split}
\frac{1-(a_{\delta,\e}')^2}{a_{\delta,\e}^2}\geq \frac{k^2-C_2  \e}{(1+\sigma)^2}. 
\end{split}
\end{equation}

Similarly by differentiating \eqref{eqn:1d-a} once more, 
\begin{equation}
\begin{split}
-a_{\delta,\e}''&=-\phi_{\pi/8k}''\cdot (\gamma_{\delta,\e}-\b_{\e,k})+\e \cdot\phi_{\pi/8k}'\cdot \left[r\cos(kr)+\frac{\sin(kr)}{k} \right]\\
&\quad+\e \cdot\phi_{\pi/8k}\cdot \left(2\cos(kr)-kr \sin(kr)\right)\\
&\quad  +\phi_{\pi/8k}'' \frac{\e r\cdot \sin(kr)}{k}+\phi_{\pi/8k}' \left[\frac{\e r\cdot \sin(kr)}{k}\right]'+k^2\b_{0,k}\\
&\geq (k^2-C_3\e-C_3\sigma)\b_{0,k}.
\end{split}
\end{equation}
for some $C_3>0$, where we have used 
\begin{equation*}
\begin{split}
\b_{\e,k}\leq \gamma_{\delta,\e}&\leq (1+\sigma) \b_{0,k}=\frac{1+\sigma}{1-\e r} \b_{\e,k}
\end{split}
\end{equation*}

This shows 
\begin{equation}
-\frac{a_{\delta,\e}''}{a_{\delta,\e}}\geq \frac{k^2-C_3\e-C_3\sigma}{1+\sigma}.
\end{equation}

The assertion follows by choosing $\e,\sigma$ small enough with re-labelling.
\end{proof}

By letting $\delta\to 0$, we obtain a sequence of metrics on $\mathbb{S}^m$ whose limit is non-smooth at $r=0$ with unbounded curvature, but is Reifenburg at $r=0$.
\begin{prop}\label{prop:sphere-3}
For any $k>1$ and $\e\in (0,\e_0)$,  there exists a sequence of smooth metrics $h_i$ on $\mathbb{S}^m$ such that $\Rm(h_i)\geq 1$ and it converges in Gromov-Hausdorff sense as $i\to+\infty$ to a Reifenburg metric $h_\infty$ on $\mathbb{S}^m$ given by
$$h_\infty=dr^2+a_{0,\e}^2(r)\,g_{\mathbb{S}^{m-1}},$$
where $a_{0,\e}(r)=\phi_{\pi/8k}\cdot \b_{\e,k}+(1-\phi_{\pi/8k}) \cdot \b_{0,k}$, in warped product coordinate, $(0,\pi/k)\times \mathbb{S}^{m-1}$, such that it is smooth outside $r=0$ in the warped product coordinate and satisfies $\lim_{r\to 0}\lambda_1(\Rm(h_i))=+\infty$.
\end{prop}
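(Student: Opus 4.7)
Fix $k>1$ and $\e\in(0,\e_0)$, and choose $\sigma>0$ small enough that the curvature lower bounds of Propositions~\ref{prop:sphere-1} and \ref{prop:sphere-2} both exceed $1$. Select a sequence $\delta_i\searrow 0$ with $\delta_i<\delta_0(k,\sigma)$ and set $h_i:=h_{\delta_i,\e}$. Each $h_i$ is a smooth metric on $\mathbb{S}^m$ satisfying $\Rm(h_i)\geq 1$ by the two preceding propositions. The key observation for convergence is that $\gamma'_{\delta,\e}(r)=\b'_{\e,k}(r)$ for $r\in[2\delta,\pi/k]$ (since $\phi_\delta\equiv 0$ there), so
$$\gamma_{\delta_i,\e}(r)-\b_{\e,k}(r)\;=\;\int_0^{2\delta_i}\phi_{\delta_i}(s)\bigl[\b'_{0,1}(s)-\b'_{\e,k}(s)\bigr]\,ds\;=\;O(\delta_i^2)$$
uniformly in $r\in[2\delta_i,\pi/k]$, the final bound coming from the Taylor expansion $\b'_{0,1}(s)-\b'_{\e,k}(s)=2\e s+O(s^2)$. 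Since $a_{\delta_i,\e}-a_{0,\e}=\phi_{\pi/8k}(\gamma_{\delta_i,\e}-\b_{\e,k})$ is $O(\delta_i^2)$ times a fixed smooth cutoff on $[2\delta_i,\pi/k]$, I conclude $a_{\delta_i,\e}\to a_{0,\e}$ in $C^\infty_{\mathrm{loc}}((0,\pi/k))$, hence $h_i\to h_\infty$ smoothly on compact subsets of $\mathbb{S}^m\setminus\{p_0\}$, where $p_0$ denotes the point $r=0$.

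Smoothness of $h_\infty$ away from $p_0$ is automatic: near $r=\pi/k$ one has $a_{0,\e}=\b_{0,k}$ which satisfies the boundary conditions \eqref{bdry}, and $a_{0,\e}$ is smooth and positive on $(0,\pi/k)$. At $p_0$, $a_{0,\e}(r)=\b_{\e,k}(r)$ for $r\leq\pi/8k$ and $\b_{\e,k}''(0)=-2\e\neq 0$ violates \eqref{bdry}, so $h_\infty$ has an honest singularity at $p_0$. Global Gromov--Hausdorff convergence $h_i\to h_\infty$ then follows by combining the smooth compact convergence with the observation that the disagreement set $\{r\leq 2\delta_i\}$ has both $h_i$- and $h_\infty$-diameter $O(\delta_i)$ (by direct integration of the respective warping functions).

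To verify the Reifenburg property at $p_0$, I rescale via $\tilde r=r/\mu$:
$$\mu^{-2}h_\infty\;=\;d\tilde r^2+\Bigl(\tfrac{\b_{\e,k}(\mu\tilde r)}{\mu}\Bigr)^2 g_{\mathbb{S}^{m-1}},$$
and since $\b_{\e,k}(\mu\tilde r)/\mu\to\b'_{\e,k}(0)\,\tilde r=\tilde r$ uniformly on compact subsets of $[0,\infty)$ as $\mu\to 0^+$, the rescaled metric converges in pointed Gromov--Hausdorff sense to $d\tilde r^2+\tilde r^2 g_{\mathbb{S}^{m-1}}$, i.e.\ flat $\R^m$ in polar form. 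Combined with smoothness at every other point, this gives the Reifenburg condition. For the curvature blow-up, the computation
$$-\frac{a_{0,\e}''(r)}{a_{0,\e}(r)}\;=\;\frac{(1-\e r)k\sin(kr)+2\e\cos(kr)}{(1-\e r)\sin(kr)/k}\;\sim\;\frac{2\e}{r}\longrightarrow+\infty\quad(r\to 0^+)$$
yields $\lim_{r\to 0}\lambda_1(\Rm(h_\infty))=+\infty$, which via $C^\infty_{\mathrm{loc}}$-convergence transfers to $\sup_{\mathbb{S}^m}\lambda_1(\Rm(h_i))\to+\infty$ as $i\to\infty$. The main technical obstacle is the precise $O(\delta_i^2)$ matching of $\gamma_{\delta_i,\e}$ with $\b_{\e,k}$ on the transition window, delivered cleanly by the integral identity above; the remainder consists of explicit calculation with the rotationally symmetric warping functions.
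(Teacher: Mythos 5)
Your proposal is correct and follows essentially the same route as the paper's own proof — send $\delta_i\to 0$ in the family $h_{\delta_i,\e}$, identify the limiting warping function $a_{0,\e}=\b_{\e,k}$ near $r=0$, and verify the singularity, the Euclidean blow-up at $r=0$, and the curvature blow-up by direct computation — only with more quantitative detail (the $O(\delta_i^2)$ matching of $\gamma_{\delta_i,\e}$ with $\b_{\e,k}$ and the explicit rescaling argument), which the paper leaves implicit. One small addition: since $\lambda_1$ is the smallest eigenvalue of $\Rm$, you should also record that the tangential eigenvalue $(1-(a_{0,\e}')^2)/a_{0,\e}^2\sim 4\e/r$ blows up as $r\to 0$, not only $-a_{0,\e}''/a_{0,\e}\sim 2\e/r$; with that one line your argument is complete.
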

\begin{proof}
Fix $k>1$ and $\e$ small. Let $\delta_i\to 0$ and $h_{\delta_i,\e}$ be the metric obtained above. Clearly, $\lim_{i\to \infty}\gamma_{\delta_i,\e}(r)=\lim_{i\to 0}\int_0^r \phi_{\delta_i}\b_{0,1}'+(1-\phi_{\delta_i})\b_{\e,k}'\,ds=\b_{\e,k}(r)$. Hence $h_{\delta_i,\e}\to h_{0,\e}=dr^2+a_{0,\e}^2(r)\,g_{\mathbb{S}^{m-1}}$ where $$a_{0,\e}(r)=\lim_{i\to \infty}\gamma_{\delta_i,\e}(r)=\b_{\e,k}(r)$$
for $r\in (0,\pi/8k]$ and similarly $$a_{0,\e}(r)=\lim_{i\to \infty}\phi_{\pi/8k}\cdot \gamma_{\delta_i,\e}+(1-\phi_{\pi/8k}) \cdot \b_{0,k}=\phi_{\pi/8k}\cdot \b_{\e,k}+(1-\phi_{\pi/8k}) \cdot \b_{0,k}$$ for $r\in (\pi/8k, \pi/k)$. Clearly, $h_{0,\e}$ is singular at $r=0$, and smooth away from $r=0$. Its blow-up model is $\mathbb{R}^n$ while the assertion of $\lambda_1(\Rm(h_{0,\e}))$ follows from direct computation using Lemma~\ref{lma:ode-compare}. 
\end{proof}

\subsection{Smoothing a non-Reifenburg metric}
In this subsection, we construct sequence of metrics on $\mathbb{S}^m$ in which its limit is non-Reifenburg somewhere. We follow closely the discussion in sub-section~\ref{subsec:Reifenburg}. We work on the coordinate $(0,\pi)\times \mathbb{S}^{m-1}$ and will look for rotationally symmetric metric $h=dr^2+a(r)^2 \,g_{\mathbb{S}^{m-1}}$. As in sub-section~\ref{subsec:Reifenburg}, we fix a smooth non-increasing function $\phi:[0,+\infty)\to [0,1]$. For $\delta\in (0,\pi/4)$ and $k>1$, consider the ODE:
\begin{equation*}
\left\{
\begin{array}{ll}
\gamma_{\delta}'=\phi_\delta \b_{0,1}'+\frac1k(1-\phi_\delta)\b_{0,1}';\\[3mm]
\gamma_{\delta}(0)=0
\end{array}
\right.
\end{equation*}
on $[0,\pi/2]$, where $\b_{0,1}(r)=\sin(r)$, under the notations in sub-section~\ref{subsec:Reifenburg}. We extend it by defining $\gamma_\delta(r)=\gamma_{\delta}(\pi-r)$ for $r\in [\pi/2,\pi]$, i.e. the reflection along the axis $r=\pi/2$. The extension is smooth since $\gamma_\delta'(r)=k^{-1}\cos(r)$ for $r\in [2\delta,\pi/2]$. Therefore, the resulting metric  $h_{\delta}:=dr^2+ \gamma_\delta^2(r)\,g_{\mathbb{S}^{m-1}}$ is a smooth metric on $\mathbb{S}^{m}$ for $\delta\in (0,\pi/4)$. Furthermore, we have 
\begin{equation}
\gamma_0(r)=\lim_{\delta\to 0} \gamma_\delta(r) =\int^r_0 \phi_\delta \b_{0,1}'+\frac1k(1-\phi_\delta)\b_{0,1}'\, ds=\frac1k \sin(r)
\end{equation}
where $h_0=dr^2+\gamma_0^2 \, g_{\mathbb{S}^{m-1}}$ is a smooth metric outside $r=0,\pi$ and its tangent cone at $r=0$ is non-Euclidean if $k>1$.  This is inspired by some related construction by Lai \cite{Lai2020}.
\begin{prop}\label{prop:sphere-4}
The metric $\hat h_{\delta}:=k^{-1}h_\delta$ satisfies $\Rm(\hat h_\delta)\geq 1$. Moreover as $\delta\to0$, $(\mathbb{S}^m,\hat h_{\delta})$ converges in Gromov-Hausdorff sense to $(\mathbb{S}^m,\hat h_0)$ which is smooth outside $r=0,\pi$ and is not Reifenburg at $r=0,\pi$.
\end{prop}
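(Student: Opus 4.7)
My plan is to parallel the strategy of subsection~\ref{subsec:Reifenburg}, reducing the curvature bound to a region-by-region ODE calculation and then reading off the limiting behaviour directly from the formula for $\gamma_\delta$.  Since rescaling $h_\delta\mapsto k^{-1}h_\delta$ multiplies every eigenvalue of the curvature operator by $k$, proving $\Rm(\hat h_\delta)\geq 1$ is equivalent to proving $\Rm(h_\delta)\geq k^{-1}$.  By the reflection symmetry $\gamma_\delta(r)=\gamma_\delta(\pi-r)$ it suffices to work on $[0,\pi/2]$, which I would split into $[0,\delta]$, $[\delta,2\delta]$ and $[2\delta,\pi/2]$.

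On $[0,\delta]$ one has $\gamma_\delta(r)=\sin(r)$, so $h_\delta$ is spherical of curvature one.  On $[\delta,2\delta]$ I would write $\gamma_\delta'=\psi\cos(r)/k$ with $\psi:=1+(k-1)\phi_\delta\in[1,k]$ non-increasing, integrate to get $\gamma_\delta(r)\leq\sin(r)$, and use
\[
-\gamma_\delta''=\frac{\psi\sin(r)-\psi'\cos(r)}{k}\geq \frac{\psi\sin(r)}{k}
\]
to conclude $-\gamma_\delta''/\gamma_\delta\geq k^{-1}$, while $(\gamma_\delta')^2\leq\cos^2(r)$ gives $(1-(\gamma_\delta')^2)/\gamma_\delta^2\geq 1$.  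On $[2\delta,\pi/2]$, $\phi_\delta\equiv 0$, and integrating the ODE yields the explicit formula
\[
\gamma_\delta(r)=\frac{\sin(r)}{k}+\Big(1-\frac{1}{k}\Big)C_\delta,\qquad C_\delta:=\int_0^{2\delta}\phi_\delta(s)\cos(s)\,ds,
\]
with $C_\delta<\sin(2\delta)$ because $\phi_\delta\leq 1$ and is not identically one on $[0,2\delta]$.  The first eigenvalue bound reduces algebraically to $\sin(r)\geq C_\delta$, which is valid since $\sin(r)\geq \sin(2\delta)>C_\delta$, and the second eigenvalue bound reduces after expansion to
\[
k^2+k+1+(\sin(r)-C_\delta)^2\geq kC_\delta^2,
\]
which is immediate from $C_\delta<\sin(2\delta)<1$.

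The Gromov--Hausdorff convergence follows quickly: $\phi_\delta\to 0$ pointwise on $(0,\infty)$ and $C_\delta\to 0$, so dominated convergence gives $\gamma_\delta\to\gamma_0=\sin(r)/k$ uniformly on $[0,\pi]$, whence $(\mathbb{S}^m,\hat h_\delta)\to(\mathbb{S}^m,\hat h_0)$ in the pointed GH topology.  Smoothness of $\hat h_0$ on $\mathbb{S}^m\setminus\{r=0,\pi\}$ is immediate since $\sin(r)/k$ is smooth and strictly positive on $(0,\pi)$.

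Finally, to rule out the Reifenburg condition at $r=0$ (and at $r=\pi$ by symmetry), I would blow up $\hat h_0$ at $r=0$.  Writing $\hat h_0=k^{-1}[dr^2+(\sin(r)/k)^2\, g_{\mathbb{S}^{m-1}}]$, the Taylor expansion $\sin(r)=r+O(r^3)$ and the change of variables $\tilde\rho=r/\sqrt{k}$ show that the rescalings $\lambda^2\hat h_0$ converge in pointed GH sense, as $\lambda\to\infty$, to the Euclidean cone $d\tilde\rho^2+(\tilde\rho/k)^2\, g_{\mathbb{S}^{m-1}}$ whose link is the round $(m-1)$-sphere of radius $1/k<1$.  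This cone is not isometric to $\mathbb{R}^m$, so the unique tangent cone at $r=0$ is non-Euclidean and the Reifenburg condition fails there.  I expect the main technical point to be the second eigenvalue computation on $[2\delta,\pi/2]$, where the additive constant $C_\delta$ in $\gamma_\delta$ forces the algebraic manipulation above rather than a direct comparison with $\gamma_0$.
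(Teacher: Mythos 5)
Your argument is correct, and its core is the same ODE comparison the paper uses; the main difference is that you split $[0,\pi/2]$ into three regions and, on $[2\delta,\pi/2]$, integrate the ODE explicitly to get $\gamma_\delta(r)=\tfrac1k\sin(r)+(1-\tfrac1k)C_\delta$ and then verify the eigenvalue bounds by brute-force algebra. The paper's proof avoids both the case split and the constant $C_\delta$ entirely: since $\tfrac1k\cos(r)\le\gamma_\delta'\le\cos(r)$ on all of $(0,\pi/2)$ (with $\gamma_\delta(0)=0$), one gets $\tfrac1k\sin(r)\le\gamma_\delta\le\sin(r)$ globally, whence $1-(\gamma_\delta')^2\ge\sin^2(r)\ge\gamma_\delta^2$ gives the second eigenvalue bound (even with constant $1$, not just $1/k$), and $-\gamma_\delta''\ge\tfrac1k\sin(r)+(1-\tfrac1k)\phi_\delta\sin(r)\ge\tfrac1k\gamma_\delta$ (using $\phi_\delta'\le0$) gives the first; so your anticipated ``main technical point'' on $[2\delta,\pi/2]$ is not needed, as a direct comparison with $\sin(r)$ (rather than with $\gamma_0$) already works there. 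Two small remarks on your computation: in the reduced inequality for the second eigenvalue the correct form is $k^2+k+(\sin(r)-C_\delta)^2\ge kC_\delta^2$ (your extra $+1$ is a harmless slip, and the conclusion is unaffected since $C_\delta<1$); and for the Gromov--Hausdorff convergence, uniform convergence $\gamma_\delta\to\tfrac1k\sin(r)$ does not give uniform bi-Lipschitz closeness near the poles, so one should argue via $\gamma_0\le\gamma_\delta\le\gamma_0+(1-\tfrac1k)C_\delta$ together with the smallness of the polar caps, though this is standard and the paper itself asserts the convergence without proof. Your blow-up computation showing the tangent cone at $r=0,\pi$ is the cone over a round sphere of radius $1/k<1$, hence non-Euclidean, correctly supplies the non-Reifenburg claim, which the paper only states in the paragraph preceding the proposition.
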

\begin{proof}
It suffices to consider $r\in (0,\pi/2)$ where we have $
0<\frac1k \b_{0,1}'\leq \gamma_\delta'\leq  \b_{0,1}'$
for all $\delta>0$. Therefore, 
\begin{equation}
\frac1k \b_{0,1}(r)\leq \gamma_\delta(r)\leq  \b_{0,1}(r)
\end{equation}
for all $r\in (0,\pi/2)$ and $\delta>0$.  Thus,
\begin{equation}
\begin{split}
\frac{1-(\gamma_\delta')^2}{\gamma_\delta^2}&\geq \frac{1-(\b_{0,1}')^2}{\b_{0,1}^2}=1.
\end{split}
\end{equation}
Similarly using $\phi'\leq 0$, we have 
\begin{equation}
\begin{split}
-\gamma_\delta''&=\frac1k \sin(r)-\left(1-\frac1k \right)\phi_\delta' \cos(r)+\left(1-\frac1k \right)\phi_\delta \sin(r)\\
&\geq \frac1k \sin(r) +\left(1-\frac1k \right)\phi_\delta \sin(r)\geq \frac1k  \gamma_\delta.
\end{split}
\end{equation}
Hence, we see that $\Rm(h_\delta)\geq \frac1k$.
\end{proof}

\section{Construction of solitons}\label{conssoli}

In this section, we will use Theorem~\ref{thm:der} and metrics constructed in section~\ref{sec:sphere-metric} to produce different gradient expanding Ricci solitons with $\Rm\geq 0$. We need a compactness result which is well-known to experts. See for instance \cite{SchulzeSimon2013, Lai2020, DSS2022}.

\begin{prop}\label{prop:appr}
    Let $(M_i,g_i, f_i)$ be a sequence of complete noncompact expanding gradient Ricci solitons with $\Rm(g_i)\geq 0$ and $\mathrm{AVR}(g_i)\ge c_0>0$ for some constant $c_0$. Suppose that each $(M_i,g_i, f_i)$ is smoothly asymptotic to $(C(X_i), dr^2+r^2g_{X_i}, o_i)$ where $(X_i,g_{X_i})$ is a smooth closed manifold with $\Rm(g_{X_i})\ge 1$ and $(X_i,d_{g_{X_i}})$ converges in Gromov Hausdorff sense to a compact metric space $(X_\infty, d_{g_{X_\infty}})$. Then after passing to a subsequence, $(M_i,g_i(t), p_i)$ converges in Cheeger Gromov sense on $(0,\infty)$ as $i\to \infty$ to a complete expanding gradient Ricci soliton $(M_\infty,g_\infty(t), p_\infty)$ which is asymptotic to $C(X_\infty)$ in the distance sense, where $g_i(t)$ is the induced Ricci flow $g_i$ and $p_i$ is the unique critical point of $f_i$. 
\end{prop}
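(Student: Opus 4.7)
The plan is to establish uniform non-collapsing and curvature estimates for the induced Ricci flows $g_i(t)$, apply Hamilton's compactness theorem to extract a subsequential Cheeger-Gromov limit on $(0,\infty)$, and then verify that the limit inherits an expanding soliton structure with the prescribed conical asymptotic. For the non-collapsing, $\Rm(g_i)\geq 0$ and $\mathrm{AVR}(g_i)\geq c_0$ give, via Bishop-Gromov, a uniform scale-invariant volume lower bound $\mathrm{Vol}_{g_i}(B_{g_i}(x,r))\geq c_0\,\omega_n r^n$ on all of $M_i$. Since $\Rm\geq 0$ is preserved by Ricci flow, this passes to $g_i(t)$ and yields uniform $\kappa$-non-collapsing on every compact subinterval of $(0,\infty)$.

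For the curvature estimate, which is the central technical step, I exploit that $\Rm(g_{X_i})\geq 1$ implies $\Rm\geq 0$ on each cone $C(X_i)$, and each flow $g_i(t)$ comes out of $C(X_i)$ smoothly at $t=0$. The soliton identities $\nabla R=-2\Ric(\nabla f)$ and $R+|\nabla f|^2-f=\mathrm{const}$, combined with $\Rm\geq 0$ and the fact that the flow of $\nabla f$ emanates from the unique critical point $p_i$, imply that $R$ attains its maximum at $p_i$. Together with the $\mathrm{AVR}$ bound, a pseudolocality-type argument in the spirit of \cite{SchulzeSimon2013, Lai2020, DSS2022} produces $|\Rm(g_i(t))|\leq C(n,c_0,T)/t$ for $t\in(0,T]$, uniform in $i$; Shi's estimates upgrade this to derivative bounds of all orders. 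Hamilton's compactness theorem then yields a pointed Cheeger-Gromov subsequential limit $(M_\infty,g_\infty(t),p_\infty)$ on $(0,\infty)$. Normalizing $f_i(p_i)=\min f_i$, the soliton equation provides uniform $C^k_{\mathrm{loc}}$ control on $f_i$, so a subsequence converges to $f_\infty\in C^\infty(M_\infty)$ satisfying the expanding soliton equation, and $g_\infty(t)$ is the canonical flow of $(M_\infty,g_\infty,f_\infty)$. The asymptotic cone assertion then follows from the distance distortion \eqref{ddist}: as $t\to 0^+$, $(M_i,d_{g_i(t)},p_i)$ approaches $(C(X_i),d_c,o_i)$, and the Gromov-Hausdorff convergence $C(X_i)\to C(X_\infty)$ gives $(M_\infty,d_{g_\infty(t)},p_\infty)\xrightarrow{\mathrm{pGH}}(C(X_\infty),d_c,o_\infty)$, with the limit distance on $M_\infty$ isometric to that on $C(X_\infty)$.

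The main obstacle is the uniform curvature estimate in the second step: although each individual $g_i$ is smoothly asymptotic to its cone $C(X_i)$, the cones themselves may degenerate in the Gromov-Hausdorff limit, so any uniform bound must be scale-invariant and cannot rely on the individual smoothness of any link. The hypothesis $\mathrm{AVR}\geq c_0$ together with $\Rm(g_{X_i})\geq 1$ is precisely what makes the pseudolocality-style estimate close up uniformly in $i$; without either assumption the sequence could collapse or develop uncontrollable curvature concentration near the tip.
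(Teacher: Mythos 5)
Your proposal follows essentially the same route as the paper: a uniform scale-invariant curvature bound $|\Rm(g_i(t))|\le C(n,c_0)t^{-1}$ from $\Rm\ge 0$ plus the AVR lower bound, Hamilton's compactness to extract the limit flow, local $C^k$ control and subconvergence of the potentials $f_i$ to recover the soliton equation in the limit, and the uniform distance distortion $d_{g_i(t)}\le d_{C(X_i)}\le d_{g_i(t)}+C\sqrt t$ together with $C(X_i)\to C(X_\infty)$ to identify the asymptotic cone in the distance sense. The only place where your justification diverges from the paper's is the key curvature estimate: the paper obtains it directly from Perelman \cite{Perelman2002} (a blow-up at points where $t|\Rm|$ is large would produce a non-flat ancient $\kappa$-solution with $\mathrm{AVR}>0$, which is impossible), whereas your sketch via ``$\mathrm{scal}$ attains its maximum at $p_i$'' plus a ``pseudolocality-type argument'' does not by itself close: the maximum-at-$p_i$ observation gives no bound on $\mathrm{scal}_{g_i}(p_i)$ uniform in $i$, and pseudolocality proper requires almost-Euclidean regions, which $\mathrm{AVR}\ge c_0$ with small $c_0$ does not supply; since you anchor this step in the same references the paper cites (\cite{SchulzeSimon2013, Lai2020, DSS2022}), it is acceptable at the paper's own level of rigor, but the clean mechanism is the $\kappa$-solution/AVR argument rather than pseudolocality. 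Two further small points: self-similarity makes the nonnegativity of $\Rm$ and the value of $\mathrm{AVR}$ automatic for $g_i(t)$ (no preservation result is needed), and for the potentials the paper pins down the additive normalization via Hamilton's identity and the uniform quadratic growth of $f_i$ (\cite[Corollary 27.10]{ChowBook4}), which is what your ``normalize $f_i(p_i)$'' step should be replaced by, while the existence of the limiting distance $d_0$ on $M_\infty$ generating the manifold topology is the content of \cite[Lemma 3.1]{SimonToppingGT} rather than a formal consequence of the pGH statement.
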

\begin{proof}
    We sketch the idea of proof for the sake of completeness. By the work of Perelman \cite{Perelman2002}, $\Rm(g_i)\geq 0$ and volume non-collapsing, there exists a positive constant $C_1=C_1(n,c_0)$ such that for all $i$
    \[
    |{\Rm(g_i(t))}|\leq C_1t^{-1}\;\; \text{  on  } M_i\times (0,\infty).
    \]
    By the Hamilton compactness \cite{Hamilton-compact}, the Cheeger-Gromov convergence to a complete immortal Ricci flow $(M_\infty,g_\infty(t), p_\infty)$ follows. We now claim that $g_\infty(t)$ is an induced Ricci flow of an expanding gradient Ricci soliton.  If suffices to consider $t=1$.     By the Hamilton's identity $|\nabla f|^2+R=f$, we have $0\le f_i(p_i)\le R_{g_i}(p_i)\leq C_2(n,c_0)$. By the uniform quadratic growth of $f_i$ from \cite[Corollary 27.10]{ChowBook4}, $|\nabla_{g_i}f_i|^2\le |f_i| \leq C(r)$ on $B_{g_i}(p_i,r)$ for all $i$ and $r>0$. Thanks to soliton's equation and curvature estimate of $g_i(t)$, $f_i$ is locally uniform bounded in $C^k_{loc}$ and hence $f_i$ sub-converges smoothly and locally to $f_\infty$ on $M_\infty$ which satisfies
    \[
    2\nabla_{g_\infty(1)}^{2}f_\infty=2\Ric(g_\infty(1))+g_\infty(1),
    \]
    where $g_\infty(t)$ is the limiting Ricci flow obtained from above. It remains to prove the asymptotic.
    By the definition of cone metric, we have $(C(X_i), d_{c_i},o_i)$ converges in pointed Gromov Hausdorff sense to $(C(X_\infty), d_{c_\infty}, o_\infty)$ as $i\to\infty$. On the other hand by the Hamilton-Perelman distance distortion estimate (e.g. see \cite[Lemma 3.4]{SimonTopping2022}), we have 
    \[
    d_{g_i(t)}\leq d_{C(X_i)}\leq  d_{g_i(t)}+C\sqrt{t} 
    \]
    for some $C(n,c_0)>0$, since $g_i(t)$ is also a Ricci flow coming out of its asymptotic cone. Now it follows that   $d_{g_\infty(t)}$ converges to a metric $d_{\infty,0}$ generating the same topology as $M_\infty$, by  \cite[Lemma 3.1]{SimonToppingGT}, as $t\to 0^+$ such that $(M_\infty,d_{\infty,0})$ is isometric to $(C(X_\infty), d_{c_\infty})$. This proves the proposition.
    \end{proof}

\medskip

We now use the compactness to construct expanding gradient Ricci solitons coming out of cone $C(X)$ with non-smooth link $X$.
\medskip

We start with the construction showing expanding soliton with curvature decay at infinity but has infinite scalar curvature ratio. 
We need a lemma relating the curvature decay of an expander and the Riefenburg property of its asymptotic cone.
\begin{lma}\label{curdecay} Let $(M^n,g,f)$ be a non-flat complete expanding gradient Ricci soliton with $\Ric \ge 0$ and bounded curvature, and asymptotic to a cone $C(X)$ in the distance sense. Then $\displaystyle\lim_{x\to\infty}|{\Rm}|=0$ if and only if $C(X)\setminus \{o\}$ is Reifenburg,
\end{lma}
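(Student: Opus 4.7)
The strategy is to use the induced Ricci flow $g(t)=t\phi_t^*g$ as a bridge between $(M,g)$ near infinity and $(C(X),d_c)$ near infinity. By the distance-sense asymptotic, $d_{g(t)}$ converges to some $d_0$ on $M$ with $(M,d_0)$ isometric to $(C(X),d_c)$ via an isometry $\Psi$ sending the critical point $p$ to the vertex $o$. Combining the identities $d_{g(t)}(x,y)=\sqrt{t}\,d_g(\phi_t(x),\phi_t(y))$ and $|\Rm(g(t))|(x)=t^{-1}|\Rm(g)|(\phi_t(x))$ with the cone self-similarity $T_\lambda:(r,\theta)\mapsto(\lambda r,\theta)$ produces the key correspondence: for any $x\in M\setminus\{p\}$ and $\lambda=1/\sqrt{t}\to\infty$, the pointed metric spaces $(M,g,\phi_t(x))$ and $(C(X),\lambda d_c,\Psi(x))\cong(C(X),d_c,T_\lambda\Psi(x))$ are asymptotically pGH-equivalent, the error being controlled by the convergence $d_{g(t)}\to d_0$ uniformly on $d_0$-compact sets.

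For the forward direction, assume $|\Rm(g)|(x)\to 0$ as $x\to\infty$. Given $q\in C(X)\setminus\{o\}$ and $\lambda_i\to\infty$, set $y_i:=\phi_{1/\lambda_i^2}(\Psi^{-1}(q))$; then $y_i\to\infty$ and $|\Rm(g)|$ tends to $0$ uniformly on $B_g(y_i,R)$ for each fixed $R$, since $d_g(\cdot,p)\ge d_g(y_i,p)-R\to\infty$ on that ball. By Shi's derivative estimates (applied to $g(t)$ near $t=1$) together with the bounded curvature hypothesis, $(M,g,y_i)$ subconverges in pointed $C^\infty$-Cheeger--Gromov sense to a smooth complete limit $(M_\infty,g_\infty,y_\infty)$ with $g_\infty$ flat. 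The asymptotic-cone hypothesis forces positive AVR on $(M,g)$ (via the positive volume of the compact link), and this passes to $g_\infty$, so flat rigidity yields $(M_\infty,g_\infty)\cong(\mathbb{R}^n,\delta)$. Through the correspondence, $(C(X),\lambda_i d_c,q)\to(\mathbb{R}^n,\delta,0)$ in pGH, i.e., $C(X)\setminus\{o\}$ is Reifenburg at $q$. For the backward direction, suppose for contradiction that $|\Rm(g)|(y_i)\ge\epsilon_0>0$ for some $y_i\to\infty$. Bounded curvature and noncollapsing give Cheeger--Gromov subconvergence of $(M,g,y_i)$ to a smooth $(M_\infty,g_\infty,y_\infty)$ with $|\Rm(g_\infty)|(y_\infty)\ge\epsilon_0$. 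Setting $r_i:=d_g(y_i,p)\to\infty$, $t_i:=1/r_i^2$ and $x_i:=\phi_{t_i}^{-1}(y_i)$, one checks $d_{g(t_i)}(x_i,p)=1$, so by compactness of the $d_0$-unit sphere around $p$ (identified via $\Psi$ with the link $X\subset C(X)\setminus\{o\}$), $\Psi(x_i)$ subconverges to some $q_\infty\in X$. The correspondence then yields $(M,g,y_i)$ pGH-close to $(C(X),\lambda_i d_c,\Psi(x_i))$ with $\lambda_i=r_i$, and using that pointwise Reifenburg on the compact link lifts to a uniform Reifenburg condition (standard for noncollapsed Ricci limits), this sequence converges to $(\mathbb{R}^n,\delta,0)$. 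Therefore $(M_\infty,g_\infty)\cong(\mathbb{R}^n,\delta)$ as metric spaces, forcing $g_\infty$ flat, which contradicts $|\Rm(g_\infty)|(y_\infty)\ge\epsilon_0$.

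The main obstacle is establishing the pGH correspondence in the opening paragraph, which requires upgrading the pointwise convergence $d_{g(t)}\to d_0$ to uniform convergence on $d_0$-compact sets (a consequence of the distance-sense asymptotic definition) together with uniformizing the Reifenburg property on the compact link. Once the correspondence is in place, Cheeger--Gromov compactness from the bounded curvature hypothesis and flat rigidity under positive AVR close both implications symmetrically.
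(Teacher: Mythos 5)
Your forward direction is in the same spirit as the paper's (rescaled basepoints going to infinity, Cheeger--Gromov compactness, $\mathrm{AVR}>0$ rigidity forcing the limit to be $\mathbb{R}^n$, and transferring this back to the tangent cone of $C(X)$), but the pGH correspondence you set up in the opening paragraph does not come for free from the distance-sense asymptotic, as you claim. Under merely bounded curvature and $\Ric\ge 0$, the Hamilton--Perelman distortion gives $|d_{g(t)}-d_0|\le C\sqrt{t}$, so after rescaling by $\lambda_i=1/\sqrt{t_i}$ the error is $\lambda_i|d_{g(t_i)}-d_0|=O(1)$ (since $\lambda_i\sqrt{t_i}=1$), not $o(1)$. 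To make the correspondence effective one needs the strictly better \emph{supercritical} decay $\sup_{B_{d_0}(z,r_0)}t\,|\Rm(g(t))|\to 0$ as $t\to 0^+$, which \emph{does} follow from the hypothesis $|\Rm(g)|\to 0$ at spatial infinity (this is precisely the $\varepsilon(t)\to 0$ step in the paper, and it feeds both the Hamilton compactness of the rescaled flows $h_i(t)=\lambda_i g(t/\lambda_i)$ and the identification of the initial data), but you should make this explicit rather than attribute it to the distance-sense definition alone. You also take for granted that the isometry $\Psi$ carries the critical point $p$ to the tip $o$; the paper proves this via volume-comparison rigidity, and it is not automatic.

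The serious gap is in the backward direction, where the argument becomes circular. There you invoke the same correspondence to conclude that $(M,g,y_i)$ is pGH-close (with error $\to 0$) to $(C(X),\lambda_i d_c,\Psi(x_i))$, so that the Cheeger--Gromov limit $(M_\infty,g_\infty)$ must be $\mathbb{R}^n$, contradicting $|\Rm(g_\infty)|(y_\infty)\ge\varepsilon_0$. But in this direction there is no curvature-decay hypothesis: the only available distortion bound is $|d_{g(t)}-d_0|\le C\sqrt{t}$, so the rescaled pGH error stays $O(1)$ and you cannot conclude $(M_\infty,g_\infty)\cong\mathbb{R}^n$. The supercritical decay needed to close the correspondence is precisely the conclusion you are trying to prove. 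The paper sidesteps this with a pseudo-locality input, namely Deruelle--Simon--Schulze \cite[Lemma A.2]{DSS2022}: the regularity of the limiting cone near $z_\infty$ yields $|\Rm(g(t))|\le \varepsilon_0/(4t)$ on $B_{d_0}(z_\infty,r_1)\times(0,\hat\delta)$, which combined with $|\Rm(g)|(\phi_t(z))=t\,|\Rm(g(t))|(z)$ immediately contradicts $|\Rm(g)|(y_i)\ge\varepsilon_0/2$. Some such estimate, converting the geometry of the singular initial data into a $t^{-1}$-scale curvature bound along the flow, is indispensable here and cannot be replaced by the soft correspondence argument alone.
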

\begin{proof} 
    Suppose that $\lim_{x\to\infty}|{\Rm(g)}|(x)=0$. We first show that $M\setminus\{p\}$ is Riefenburg in $(M,d_0)\cong (C(X),d_c)$, where $p$ is the unique critical point of $f$. This implies that $p=o$ and result follows. Indeed, by Colding volume convergence theorem \cite{CheegerColding1997} and the rigidity in volume comparison, $C(X)$ cannot be isometric to $\R^n$. If $p\neq o$, then $o\in M\setminus\{p\}$ and the tangent cone at $o$ w.r.t. $d_0$ is $\R^n\cong C(X)$, which is impossible. Here we have used the fact that the tangent cone of the metric cone $C(X)$ at the tip $o$ is $C(X)$ itself in the last identity.
    
    For any $z\in M\setminus\{p\}$, there is a small $r_0>0$ such that $B_{d_0}(z,r_0)\subset \subset  M\setminus\{p\}$. By \eqref{ddist}, for all $\rho>0$, there is $1>t_0>0$ such that $d_g(p,\phi_t(B_{d_0}(z,r_0))>\rho$ for all $t\in (0,t_0)$. Hence by $|\Rm(g)|(\phi_t(x))=t|{\Rm}(g(t))|(x)$ and the condition that $|{\Rm}(g)|$ decays at infinity, we have
   \begin{equation}\label{supcrit}
         \lim_{t\to 0^+}\,\varepsilon(t):= \lim_{t\to 0^+}\sup_{x\in B_{d_0}(z,r_0)} t|{\Rm}(g(t))|(x)=0,
   \end{equation} where $g(t)$ is the canonical immortal Ricci flow generated by the expander. For any sequence of positive numbers $\lambda_i\to\infty$, let $h_i(t):=\lambda_i g(t/\lambda_i)$. By the distance distortion, for all $r>0$, there is an integer $N(r)$ such that for all $i\ge N(r)$, $B_{h_i(1)}(z, r)\subseteq B_{d_0}(z,(r+c_n)/\sqrt{\lambda_i})$. Hence for $(x,t)\in B_{h_i(1)}(z, r)\times (r^{-2}, r^2)$, for all large $i$
    \[
    |{\Rm}(h_i(t))|(x)=\left|{\Rm}\left(g\left(t/\lambda_i\right)\right)\right|/\lambda_i\le \frac{\varepsilon\left(t/\lambda_i\right)}{t}.
    \]
     Since AVR$(h_i(t))=$ AVR$(g(t/\lambda_i))=$ AVR$(g)>0$ \cite{Yok}, we can apply the Cheeger-Gromov-Taylor injectivity radius estimate and the Hamilton compactness  \cite{Hamilton-compact} 
      as well as 
     the supercritical decay above to see that $(M, h_i(t),z)$ converges smoothly in Cheeger-Gromov sense on $(0,\infty)$ to a flat flow with initial data attained in the distance sense given by a tangent cone $C_0$ at $z$ of $(M, d_0)$. Thanks to AVR$(g)>0$, the limiting flow is the static $\R^n$, and $C_0$ is isometric to $\R^n$ by the distance distortion. Hence any tangent cone at $z$ w.r.t $d_0$ is $\R^n$.

    Another direction follows from Deruelle-Simon-Schulze \cite[Lemma A.2]{DSS2022}. Indeed, suppose for the sake of contradiction that there is a sequence $y_i\to\infty$ in $M$ such that $\varepsilon_0:=\limsup_{i\to \infty}|{\Rm(g)}|(y_i)>0$. We then choose a large $s_0>\min_M f$ such that $o\not\in \Sigma_{s_0}$. For all large $i$, by \eqref{ddist}, there are sequence of $z_i\in \Sigma_{s_0}$ and $0< t_i\to o^+$ as $i\to +\infty$ such that $y_i=\phi_{t_i}(z_i)$, where $\phi_{t}$ is the flow of $-\nabla f/t$ with $\phi_1=\text{id}$. After passing to subsequence, we may assume that $z_i\to z_\infty\in \Sigma_{s_0}$ as $i\to +\infty$. There is a small positive $r_1\ll 1$ such that $o\not\in B_{d_0}(z_\infty,r_1)$. By shrinking $r_1$ if necessary, we then apply \cite[Lemma A.2]{DSS2022} to see that there exists $\hat \delta>0$ such that for all $(x,t)\in B_{d_0}(z_\infty,r_1)\times (0,\hat\delta)$
    \[
    |{\Rm(g(t))}|(x)\leq \frac{\varepsilon_0}{4t}.
    \]
    In particular, for all large $i$, $z_i\in B_{d_0}(z_\infty,r_1)$ and $t_i\in (0,\hat\delta)$
    \[
  0<  \varepsilon_0/2\le |{\Rm(g)}|(y_i)=|{\Rm(g)}|(\phi_{t_i}(z_i))=t_i|{\Rm(g(t_i))}|(z_i)\le \varepsilon_0/4.
    \]
    Contradiction. This proves the lemma.
\end{proof}

\begin{proof}[Proof of Theorem~\ref{thm:ex1}]
Fix any $k>1$ and small $\varepsilon>0$. We obtain a sequence of metric $h_i$ on $\mathbb{S}^{n-1}$ using Proposition~\ref{prop:sphere-3}. Clearly, $(\mathbb{S}^{n-1},d_{h_i})$ converges in Gromov-Hausdorff sense to $(\mathbb{S}^{n-1},d_{h_\infty})$ as $i\to +\infty$. For each $i\in \mathbb{N}$,  we apply Theorem \ref{thm:der} to obtain a Ricci expander $(M_i,g_i, f_i)$ with $\Rm(g_i)\geq 0$, smoothly asymptotic to $(C(\mathbb{S}^{n-1}), dr^2+r^2h_i, o)$.  By Proposition~\ref{prop:appr}, $(M_i,g_i, f_i,p_i)$ converges in smooth Cheeger-Gromov sense to an expander  $(M_\infty,g_\infty, f_\infty,p_\infty)$ with $\Rm(g_\infty)\ge 0$ which is asymptotic to $\left( C(\mathbb{S}^{n-1}),g_{c_\infty}=dr^2+r^2h_\infty,o\right)$ in the distance sense. It can be seen that $C(\mathbb{S}^{n-1})\setminus\{o\}$ is Reifenburg w.r.t. $d_{g_{c_\infty}}$.  Hence Lemma \ref{curdecay} implies $|{\Rm(g_\infty)}|(x)\to 0$ as $x\to \infty$. Let $q_0\in\mathbb{S}^{n-1}$ be the point which corresponds to the isolated singularity of $h_{\infty}=ds^2+a_{0,\e}^2(s)\,g_{\mathbb{S}^{n-2}}$, namely $s=0$. It can be seen that $g_{c_\infty}$ is a smooth Riemannian metric on $N:=C(\mathbb{S}^{n-1})\setminus\{o_\infty, (r, q_0), r>0\}$. It follows from \cite[Theorem 1.6]{DSS2022b} that $g(t)$ converges in $C^{\infty}_{loc}(N)$ to $g_{c_\infty}$ thanks  to the distance convergence $d_{g(t)}\to d_{g_{c_\infty}}$ as $t\to 0^+$. Moreover by Proposition~\ref{prop:sphere-2}, $\Rm(h_i)$ has lower bound strictly greater than $1$ on a neighbourhood of $s=\pi/k$. Thus, a strong maximum principle argument, see  \cite[Lemma 2.2]{GS2018} for example, gives  $\Rm(g_\infty(t))>0$ on $M_\infty$ for all $t>0$.

It remains to control the asymptotic curvature lower bound ratio. By Proposition~\ref{prop:sphere-3}, for any $L>0$, there exists $q_L\in  \mathbb{S}^{n-1}\setminus \{q_0\}$ such that $\lambda_n(\Rm_{g_{c_\infty}})\geq L$ at $p'_L=(1,q_L)\in (0,+\infty)\times \mathbb{S}^{n-1}$. Using the smooth convergence of $g_\infty(t)$ to $g_{c_\infty}$ and distance distortion \cite[Lemma 3.4]{SimonTopping2022}, we have  
\begin{equation}
\begin{split}
 &\quad  \lim_{t\to 0} d_{g_{\infty}}(\phi_t(p_L'),p_\infty)^2\cdot \lambda_n(\Rm_{g_\infty})(\phi_t(p_L'))\\
 &=\lim_{t\to 0} d_{g_\infty(t)}(p_L',p_\infty)^2\cdot \lambda_n(\Rm_{g_\infty(t)})(p_L')\geq  \frac12 L.
\end{split}
\end{equation}
Since $L>0$ is arbitrary, result follows by \eqref{ddist}.
\end{proof}

\medskip

Next we construct expander with $\Rm>0$ which its curvature does not decay to $0$ at infinity. 
\begin{proof}[Proof of Theorem \ref{thm:nodecay}]

Fix $k>1$, $\delta_i\to 0$ and let $h_i=\hat h_{\delta_i}$ be the sequence obtained from Proposition~\ref{prop:sphere-4}.  For each $i\in \mathbb{N}$, we let $(M_i,g_i, f_i,p_i)$ be the expanding gradient Ricci soliton obtained by Theorem~\ref{thm:der} with $\Rm(g_i)>0$ which is smoothly asymptotic to $(C(\mathbb{S}^{n-1}), dr^2+r^2h_i)$. Moreover, this sequence converges in Cheeger Gromov sense to a complete expanding gradient Ricci soliton $(M_\infty,g_\infty, f_\infty,p_\infty)$ with $\Rm\geq 0$ and asymptotic to $(C(\mathbb{S}^{n-1}), dr^2+r^2 h_0,o)$ in the distance sense, where $h_0=ds^2+\frac1{k^{3}}\sin^2(\sqrt{k} s)\,g_{\mathbb{S}^{n-2}}$ in the warped product coordinate $(0,\pi/\sqrt{k})\times \mathbb{S}^{n-2}$ of $\mathbb{S}^{n-1}$. We denote the two (symmetric) isolated singularities of $h_0$, i.e. $s=0,\pi/\sqrt{k}$, by $q_1,q_2$ respectively. It is clear that  $C(\mathbb{S}^{n-1})\setminus K_0$ is a smooth Riemannian manifold where 
\[
K_0:=\{o, (r,q_i), r>0, i=1,2\}.
\]

Using \cite[Theorem 1.6]{DSS2022b} as in the proof of Theorem~\ref{thm:ex1}, the induced Ricci flow $g_\infty(t)$ of the expander $g_\infty$ converges in $C^\infty_{loc}$ sense to  the cone metric $dr^2+r^2h_0$ on $C(\mathbb{S}^{n-1})\setminus K_0$ as $t\to 0^+$.  Strong maximum principle implies that $\Rm(g_\infty(t))>0$ (\cite[Lemma 2.2]{GS2018}). Since the cone is not Reifenburg on $\{(r,q_i): r>0, i=1,2\}$ w.r.t $dr^2+r^2h_0$. Result then follows from Lemma \ref{curdecay}.
\end{proof}

\medskip

Finally, we give non-flat expander whose curvature decays faster than any polynomial speed along some direction.
\begin{proof}[Proof of Theorem \ref{thm:fast}]
Fix $k>1$ and a sufficiently small $\delta,\e>0$. We let $h_{\delta,\e}$ be the metric on $\mathbb{S}^{n-1}$ obtained in sub-section~\ref{subsec:Reifenburg}. By Proposition~\ref{prop:sphere-1}, $h_{\delta,\e}=ds^2+a_{\delta,\e}^2(s)\,g_{\mathbb{S}^{n-2}}$ is spherical on $(0,\delta]\times \mathbb{S}^{n-2}$ in the warped product coordinate of $\mathbb{S}^{n-1}$. Let $C(\mathbb{S}^{n-1})$ be a metric cone with metric $g_c=dr^2+r^2 h_{\delta,\e}$ and tip $o$. Then the curvature of $g_c$ vanishes on the nonempty open set in the cone 
    \[
    U:=\{(r, q)\in C(\mathbb{S}^{n-1})\setminus \{o\}, q\in \mathbb{S}^{n-1}: s(q)\in (0,\delta)\}.
    \]
    
    Thanks to Theorem \ref{thm:der} and \cite[Lemma 2.2]{GS2018}, there exists complete expanding gradient Ricci soliton $(M,g,f,p)$ with positive curvature operator smoothly asymptotic to the cone $C(\mathbb{S}^{n-1})$. Furthermore by \cite[Theorem 4.3.1]{Siepmann2013}, the induced Ricci flow of the soliton, $g(t)$ converges to $g_c$ in $C^{\infty}_{loc}$ sense on $C(\mathbb{S}^{n-1})\setminus \{o\}$ as $t\to 0^+$, see also \cite[Theorem 1.6]{DSS2022b}. We will verify that $(M,g,f,p)$ doesn't have at most polynomial curvature decay. Suppose on the contrary. Then there exists $\ell\in  \mathbb{N}$ such that 
  \begin{equation}\label{cons}
          \liminf_{x\to\infty}  d^{\ell}_g(p,x)\cdot \mathrm{scal}_g(x)>0,
  \end{equation} where $p$ is the unique critical point of the potential $f$. Denote $d_{g_{c}}$ by $d_0$. Fix a point $x_0\in U$, there is an $r_0>0$ such that $B_{d_0}(x_0,2r_0)\subset\subset U$. Without loss of generality, we may further assume that $x_0\neq p$. As the evolution equation of the scalar curvature along a Ricci flow satisfies
  \[
  (\partial_t-\Delta_{g(t)})\mathrm{scal}_{g(t)}\leq  C_n\mathrm{scal}_{g(t)}^2
  \]
 and that $g(1)=g$ has bounded curvature, we can then apply Proposition~\ref{prop:localMP} with scaling to $\varphi(x,t):=\mathrm{scal}_{g(t)}(x)$ to see that for any $m\in \mathbb{N}$, there exists $\hat T_m>0$ such that for any $t\in (0,\hat T_m)$,
$$t\mathrm{scal}(x_0,t)=t\varphi(x_0,t)\leq  r_0^{-2m-2}t^{m+1} .$$ 

Using \eqref{ddist} and distance distortion, we have
\begin{equation*}
\begin{split}
&\quad \lim_{t\to 0} d_g(\phi_t(x_0),p)^\ell \cdot \mathrm{scal}_g(\phi_t(x_0))\\
&\leq \lim_{t\to 0}\left(\frac{(d_g(x_0,p)+C)}{\sqrt{t}}\right)^\ell \cdot t\varphi(x_0,t)\\
&\leq \lim_{t\to 0}\left(\frac{(d_g(x_0,p)+C)}{\sqrt{t}}\right)^\ell \cdot r_0^{-2m-2} t^{m+1}
\end{split}
\end{equation*}

This contradict with \eqref{cons} if we choose $m>\ell/2$,
 and completes the proof of Theorem \ref{thm:fast}.
\end{proof}

\end{document}